\newcommand{\RootPath}{.}
\newcommand{\ExternalFiguresPath}{\RootPath/figures}
\newcommand{\eop}{\hspace*{\fill}~$\square$} 
\theoremstyle{plain}
\numberwithin{equation}{section}
\newtheorem{theorem}{Theorem}[section]
\newtheorem{proposition}[theorem]{Proposition}
\newtheorem{lemma}[theorem]{Lemma}
\newtheorem{Conjecture}[theorem]{Conjecture}
\newtheorem*{theorem*}{Theorem}
\newtheorem*{proposition*}{Proposition}
\newtheorem*{lemma*}{Lemma}
\newtheorem*{corollary*}{Corollary}
\newtheorem*{Conjecture*}{Conjecture}
\newtheorem*{unverified*}{Unverified Statement}
\newenvironment{conjecture*}{\begin{Conjecture*}}{\eop\end{Conjecture*}}
\theoremstyle{definition}
\newtheorem{Definition}[theorem]{Definition}
\newtheorem{Remark}[theorem]{Remark}
\newtheorem{Remarks}[theorem]{Remarks} 
\newtheorem{Example}[theorem]{Example}
\newtheorem{Examples}[theorem]{Examples}
\newtheorem{Hypothesis}[theorem]{Hypothesis}
\newtheorem{Problem}[theorem]{Problem}
\newtheorem{Problems}[theorem]{Problems}
\newtheorem{Excercise}[theorem]{Excercise}
\newtheorem*{Definition*}{Definition}
\newenvironment{definition*}{\begin{Definition*}}{\eop\end{Definition*}}
\newtheorem*{Remark*}{Remark}
\newenvironment{remark*}{\begin{Remark*}}{\eop\end{Remark*}}
\newtheorem*{Remarks*}{Remarks} 
\newenvironment{remarks*}{\begin{Remarks*} {\ } \hspace{-.5cm} \begin{enumerate}}{\end{enumerate}\eop\end{Remarks*}} 
\newtheorem*{Example*}{Example}
\newenvironment{example*}{\begin{Example*}}{\eop\end{Example*}}
\newtheorem*{Examples*}{Examples}
\newenvironment{examples*}{\begin{Examples*} \begin{enumerate}}{\end{enumerate}\eop\end{Examples*}}
\newtheorem*{Hypothesis*}{Hypothesis}
\newenvironment{hypothesis*}{\begin{Hypothesis*}}{\eop\end{Hypothesis*}}
\newtheorem*{Problem*}{Problem}
\newenvironment{problem*}{\begin{Problem*}}{\eop\end{Problem*}}
\newtheorem*{Problems*}{Problems}
\newenvironment{problems*}{\begin{Problems*} {\ } \hspace{-.5cm} \begin{enumerate}}{\end{enumerate}\eop\end{Problems*}}
\newtheorem*{Exercise*}{Exercise}
\newenvironment{exercise*}{\begin{Exercise*}}{\eop\end{Exercise*}}
\newcommand{\mycaption}[1]{\centering{\vspace{\medskipamount}\refstepcounter{figure}\textbf{Figure~\thefigure.} {#1}}}
\newcommand{\natur}{\ensuremath{\mathbb{N}}}
\newcommand{\real}{\ensuremath{\mathbb{R}}}
\newcommand{\sprod}[2]{\left<#1,#2\right>}
\newcommand{\setcond}[2]{\left\{ #1 : #2 \right\}} 
\newenvironment{FigTab}[2]{
	\begin{figure}[htb]
	\setlength{\unitlength}{#2}
	\begin{center}
	\begin{tabular}{#1}
}{
    \end{tabular}
    \end{center}
    \end{figure}
}
\newcommand{\IncludeGraph}[2]{
	\includegraphics[#1]{\ExternalFiguresPath/{#2}}
}
\newcommand{\AverkovEmail}{gennadiy.averkov@googlemail.com}
\newcommand{\range}[2]{{#1},\ldots,{#2}}
\newcommand{\setrange}[2]{\{\range{#1}{#2}\}}
\newcommand{\aff}{\mathop{\mathrm{aff}}\nolimits}
\newcommand{\diam}{\mathop{\mathrm{diam}}\nolimits}
\newcommand{\card}{\mathop{\mathrm{card}}\nolimits}
\newcommand{\const}{\mathop{\mathrm{const}}\nolimits}
\newcommand{\E}{\mathop{\mathbb{R}}\nolimits} 
\newcommand{\polynomfont}{\mathcal} 
\newcommand{\polp}{p} 
\newcommand{\prepp}{\polynomfont{P}} 
\newcommand{\overtwocond}[2]{\scriptsize \begin{array}{c} {#1}, \\ {#2} \end{array}} 
\newcommand{\dotvar}{\,\cdot\,} 
\newcommand{\lnorm}[2]{\left|{#1}\right|_{#2}} 
\newcommand{\enorm}[1]{\left|{#1}\right|} 
\newcommand{\MxL}{\left[} 
\newcommand{\MxR}{\right]} 
\newcommand{\ThmSource}[1]{\emph{(#1).}}
\newcommand{\ThmTitle}[2][]{\ifthenelse{\equal{#1}{}}{\emph{(#2).}}{\emph{(#2; #1).}}}
\newcommand{\notion}[2][]{\emph{#2}\xspace} 
\newcommand{\transp}{\top}
\newcommand{\mfor}{\ \mbox{for} \ }
\newcommand{\mand}{\ \mbox{and} \ }
\newcommand{\OneVec}{\mathrm{1\hspace{-0.26em}l}}
\newcommand{\floor}[1]{\lfloor #1 \rfloor}
\newcommand{\sti}{\mathop{s}}
\newcommand{\calX}{\mathcal{X}}
\newcommand{\calF}{\mathcal{F}}
\newcommand{\eps}{\varepsilon}
\renewcommand{\card}{\#}
\newcommand{\vertx}{\mathop{\mathrm{vert}}}
\newcommand{\mycite}[2]{\ifthenelse{\equal{#2}{}}{\cite{#1}}{\cite[#2]{#1}}\xspace}
\newcommand{\GroetschelHenk}[1][]{\mycite{MR1976602}{#1}}
\newcommand{\BosseGroetschelHenk}[1][]{\mycite{MR2166533}{#1}}
\newcommand{\Bernig}[1][]{\mycite{Bernig98}{#1}}
\newcommand{\Ziegler}[1][]{\mycite{MR1311028}{#1}}
\newcommand{\RAGbook}[1][]{\mycite{MR1659509}{#1}}
\newcommand{\BroeckerNew}[1][]{\mycite{MR765338}{#1}}
\newcommand{\BroeckerOld}[1][]{\mycite{MR1137812}{#1}}
\newcommand{\Scheiderer}[1][]{\mycite{MR1005003}{#1}}
\newcommand{\SchnBk}[1][]{\mycite{MR94d:52007}{#1}} 
\newcommand{\IneqBk}[1][]{\mycite{MR944909}{#1}}
\begin{document}
\title{Representing Simple $d$-Dimensional Polytopes by\\ $d$ Polynomials\footnote{The results of the paper were supported by the Research Unit 468 ``Methods of Discrete Mathematics for the Synthesis and Control of Chemical Processes'' funded by the German Research Foundation.}}
\date{\small \today}
\author{\small Gennadiy Averkov and Martin Henk}
\maketitle

\begin{abstract}
A polynomial representation of a convex $d$-polytope $P$ is a finite set $\{p_1(x),\ldots,p_n(x)\}$ of polynomials over $\E^d$ such that $P=\setcond{x \in \E^d}{p_1(x) \ge 0 \ \mbox{for every} \  1 \le i \le n}.$ By $\sti(d,P)$ we denote the least  possible number of polynomials in a polynomial representation of $P.$  It is known that $d \le \sti(d,P) \le 2d-1.$ Moreover, it is conjectured that $\sti(d,P)=d$ for all convex $d$-polytopes $P.$  We confirm this conjecture for simple $d$-polytopes by providing an explicit construction of $d$ polynomials that represent a given simple $d$-polytope $P.$
\end{abstract}

\newtheoremstyle{itsemicolon}{}{}{\mdseries\rmfamily}{}{\itshape}{:}{ }{}
\theoremstyle{itsemicolon}
\newtheorem*{msc*}{2000 Mathematics Subject Classification} 

\begin{msc*}
 Primary 14P05, 52B11; Secondary 52A20
\end{msc*}

\newtheorem*{keywords*}{Key words and phrases}

\begin{keywords*}
Cayley cubic, elementary symmetric polynomial, H-representation, real algebraic geometry, semi-algebraic set, theorem of Scheiderer and Br\"{o}cker
\end{keywords*}

\section{Introduction} \label{sec intro}

The Euclidean space of dimension $d \ge 2$ is denoted by $\E^d.$ The origin, scalar product, and norm in $\E^d$ are denoted by $o,$ $\sprod{\dotvar}{\dotvar},$ and $\enorm{\dotvar},$ respectively. In analytic expressions points of $\E^d$ are treated as real column vectors of length $d$. The transposition is denoted by $(\dotvar)^\transp.$
	
Let $x$ be a vector variable in $\E^d.$ Given a finite set $\prepp$ of polynomials from $\real[x],$ the sets 
\begin{align*}
	S_0&:=\setcond{x \in \E^d}{\polp(x) > 0 \ \forall  \, \polp \in \prepp} &  &\mand &  S&:=\setcond{x \in \E^d}{\polp(x) \ge 0 \ \forall \, \polp \in \prepp}
\end{align*} are called \emph{basic open} and \emph{basic closed semi-algebraic set} represented by $\prepp,$ respectively. Let $\sti(d,S_0)$ and $\sti(d,S)$ stand for the least cardinality of a set of  polynomials representing $S_0$ and $S,$ respectively. It is known that
\begin{eqnarray}
	\max_{S_0} \sti(d,S_0) & = & d, \label{mSo:bound} \\
	\max_S \sti(d,S) & = & d(d+1)/2. \label{mS:bound}
\end{eqnarray}
This was shown by Br\"ocker and Scheiderer \BroeckerNew, \Scheiderer, \BroeckerOld, \RAGbook[\S6.5, \S10.4]; some extensions are given in \cite[Chapter~5]{MR1393194}, and a modified proof is presented in \cite{MR1053289} and \cite{MR1609085}.  The known proofs of \eqref{mSo:bound} and \eqref{mS:bound} are non-constructive. More precisely, explicit procedures for constructing the sets of polynomials representing a general $S_0$ (resp. $S$) and having cardinality at most $d$ (resp. $d(d+1)/2$) are not known, since the available proofs are based on some non-constructive existence theorems. 

A set $P$ in $\E^d$ is a \notion{convex polyhedron} if it is a non-empty intersection of a finite number of half-spaces. A convex polyhedron $P \subseteq \E^d$ is said to be a \notion{convex polytope} if it is bounded and a \notion{$d$-polytope} if it is bounded and of dimension $d.$ In this paper we study the quantity $\sti(d,P),$ where $P$ is a $d$-polytope. A $d$-polytope is said to be \notion{simple} if each of its vertices is contained in precisely $d$ facets. By $\vertx(P)$ we denote the set of all vertices of $P.$ We refer to \Ziegler for the background information on convex polytopes.  A set of polynomials representing a convex polyhedron $P$ in $\E^d$ is called a \notion{polynomial representation} of $P.$ Thus, polynomial representations are generalization of \notion{H-representations,} cf. \Ziegler[p.~28]. In \GroetschelHenk[Section~5] and \BosseGroetschelHenk[Section~4] it is mentioned  that one might be able to develop efficient solution techniques for some combinatorial optimizations problems by passing from H-representations to more general polynomial representations provided the degrees of the involved polynomials are not too high. 

Let us enumerate known constructive results on $\sti(d,P),$   see also the survey \cite{Henk06PolRep}.
 Improving a result of vom Hofe \cite{vomHofe}  Bernig \Bernig showed that $\sti(2,P)=2$ for every convex polygon $P$ in $\E^2,$ see Section~\ref{sec:ex} for more details for that case. 
  For an arbitrary dimension Gr\"otschel and Henk \GroetschelHenk constructed $O(d^d)$ polynomials representing a simple $d$-polytope and pointed out the lower bound $\sti(d,P)\ge d$ for all $d$-polytopes $P.$ The smallest known upper bounds for $\sti(d,P)$ were given in \BosseGroetschelHenk and \cite{BosseDiss}. More precisely, in \BosseGroetschelHenk it was shown that 
\begin{itemize}
	\item $\sti(d,P) \le 2d-2$ for pointed $d$-dimensional cones, 
	\item $\sti(d,P) \le 2d-1$ for $d$-polytopes,
	\item $\sti(d,P) \le 2d$ for $d$-polyhedra.
\end{itemize} 
 Each of the above three bounds has a constructive proof. In \BosseGroetschelHenk[Section~1] it was conjectured that $\sti(d,P)=d$ for every convex $d$-polytope $P$ in $\E^d.$ The aim of  this paper is to  confirm this conjecture for the class of simple $d$-polytopes, see Theorem~\ref{p-rep:simp} below. We recall that a $d$-polytope is \notion{simple} if each of its vertices is incident with precisely $d$ facets.  Our construction involves \notion[elementary symmetric polynomial]{elementary symmetric polynomials} defined by
\begin{equation} \label{sigma_k def eq}
	\sigma_l(y) :=\sigma_l(y_1,\ldots,y_m) :=\sum_{\overtwocond{J \subseteq \{1,\ldots,m\}}{\card J = l}} \prod_{j \in J} y_{j},
\end{equation}
where $y:=[y_1,\ldots,y_m]^\transp \in \E^m$ and $\card$ stands for the cardinality. We also put $\sigma_0(y):=1$ and $\sigma_l(y):=0$ for $l<0$ and $l>m.$

\begin{theorem} \label{p-rep:simp}  Let $P$ be a simple $d$-polytope in $\E^d.$ Then $\sti(d,P)=d.$ Furthermore,  assume that $P$  has $m$ facets and is given by affine inequalities $q_1(x) \ge 0,\ldots, q_m(x) \ge 0.$  
 Then
\begin{equation} \label{P:repr}
	P = \setcond{x \in \E^d}{p_i(x)\ge 0 \ \mbox{for} \ 0 \le i \le d-1}. 
\end{equation}
where
\begin{eqnarray*}
	p_{d-1}(x) & := & \sigma_m(q_1(x),\ldots,q_m(x)), \\
	& \cdots &  \\
	p_i(x) &:=& \sigma_{m-d+i+1}(q_1(x),\ldots,q_m(x)), \\
	& \cdots & \\
	p_1(x) &:=& \sigma_{m-d+2}(q_1(x),\ldots,q_m(x))
\end{eqnarray*}
and 
\begin{equation*}
	p_0(x) := 1 - \sum_{v \in \vertx(P)} y_{v} \left( \frac{1}{d} \sum_{\overtwocond{j=1,\ldots,m}{q_j(v)=0}} \bigl(1 - \lambda_j q_j(x)\bigr)^{2k}  \right)^{2k}
\end{equation*}
with appropriate $k \in \natur,$ $y_v > 0$ and $\lambda_j> 0.$ 
 \eop
\end{theorem}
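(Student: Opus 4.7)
The plan is to establish $P=\{x:p_i(x)\ge 0,\ 0\le i\le d-1\}$ by proving both inclusions. For the forward inclusion $P\subseteq\{x:p_i(x)\ge 0\ \forall i\}$, the inequalities $p_i(x)\ge 0$ for $1\le i\le d-1$ are immediate: on $P$ every $q_j(x)\ge 0$, and any elementary symmetric polynomial of nonnegative numbers is a sum of products of nonnegative numbers, hence nonnegative. For $p_0(x)\ge 0$ on $P$, I would first fix $\lambda_j:=2/M_j$ with $M_j:=\max_{x\in P}q_j(x)$, so that $\lambda_j q_j(x)\in[0,2]$ and hence $|1-\lambda_j q_j(x)|\le 1$ throughout $P$. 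Writing $I_v=\{j:q_j(v)=0\}$ and $T_v(x):=\bigl(\tfrac{1}{d}\sum_{j\in I_v}(1-\lambda_j q_j(x))^{2k}\bigr)^{2k}$, this forces $T_v(x)\le 1$ for $x\in P$. Choosing weights $y_v>0$ with $\sum_v y_v=1$ then gives $\sum_v y_v T_v(x)\le 1$ and therefore $p_0(x)\ge 0$ on all of $P$.

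For the reverse inclusion, assume $x\notin P$ and set $J(x):=\{j:q_j(x)<0\}$ with $s:=|J(x)|\ge 1$; I want some $p_i(x)<0$. My plan is to combine a sign-change (Descartes') analysis of the $\sigma_l$'s with a geometric argument that uses simplicity of $P$. The auxiliary polynomial $t\mapsto\prod_j(t+q_j(x))$ is real-rooted with coefficient sequence $\sigma_0(q(x)),\ldots,\sigma_m(q(x))$, so $s$ equals the number of sign changes in that sequence. Imposing $p_1(x),\ldots,p_{d-1}(x)\ge 0$ fixes the signs of the last $d-1$ coefficients, which rules out small $s$ directly (already $s=1$ forces $p_{d-1}=\sigma_m<0$), and I would extend this to show that the only geometric possibility left under simplicity of $P$ is that $J(x)\supseteq I_v$ for some vertex $v\in\vertx(P)$.

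In that remaining case the argument shifts to $p_0$. For $j\in I_v$ the strict inequality $q_j(x)<0$ gives $(1-\lambda_j q_j(x))^{2k}>1$, so the inner mean over $I_v$ exceeds $1$ and $T_v(x)$ grows roughly like $\bigl(1+\lambda\max_{j\in I_v}|q_j(x)|\bigr)^{4k^2}$ as $x$ moves away from $v$ into the antistar region. Choosing $k$ sufficiently large (in a way depending only on $P$) makes $y_v T_v(x)$, together with any additional positive contributions from other vertices whose facets are also ``overshot'' at $x$, exceed $1$, which yields $p_0(x)<0$ and completes the proof.

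The main obstacle is the geometric-algebraic step: showing that, for a simple $d$-polytope, the combined hypotheses $x\notin P$ and $p_1(x),\ldots,p_{d-1}(x)\ge 0$ really force $J(x)\supseteq I_v$ for some $v$. This is exactly where simplicity (each vertex meeting precisely $d$ facets, with the associated simplicial normal-cone structure) must be used in an essential way, because the nonnegativity of $\sigma_{m-d+2},\ldots,\sigma_m$ is a much weaker constraint on the signs of $q_1(x),\ldots,q_m(x)$ than full nonnegativity, and only simplicity bridges the gap. A secondary difficulty is the simultaneous calibration of $k$ and the weights $y_v$: the exponent $k$ has to be large enough that the blow-up of $T_v$ outside $P$ dominates the weights, while compactness of $P$ and of the relevant part of its complement is what lets a single uniform $k$ suffice.
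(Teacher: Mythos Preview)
Your forward-inclusion argument contains a choice that is fatal for the reverse inclusion. With $\sum_v y_v=1$ you obtain $p_0(v)>0$ \emph{strictly} at every vertex $v$: indeed $T_v(v)=1$, while for $w\ne v$ at least one $j\in I_w$ has $q_j(v)>0$, so $T_w(v)<1$ and hence $\sum_w y_w T_w(v)<\sum_w y_w=1$. By continuity $p_0>0$ on a full neighborhood of each vertex. But points just past a vertex $v$, on the wrong side of all $d$ facets through $v$, already satisfy $p_1,\ldots,p_{d-1}\ge 0$ (these are precisely the Cayley-type spikes visible in the tetrahedron and cube figures), and with your $p_0$ they now also satisfy $p_0>0$. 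Such points lie in $\{x:p_i(x)\ge 0\ \forall i\}$ but not in $P$, and no enlargement of $k$ repairs this: the obstruction is at $x=v$ itself, independent of $k$. The paper's remedy is to choose the $y_v$ by \emph{interpolation}, solving the linear system $A_k y_k=\OneVec$ so that $f_k(w)=1$, i.e.\ $p_0(w)=0$, at \emph{every} vertex $w$. Each vertex then sits on the boundary of $S_k=\{p_0\ge 0\}$, and a first-order (gradient) computation at $w$ shows that $\nabla f_k(w)$ points into the opposite cone $C_w$, giving $S_k\cap C_w=\{w\}$.

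Your Descartes' plan for the reverse inclusion is also not the right shape. The global claim that $p_1,\ldots,p_{d-1}\ge0$ and $x\notin P$ force $J(x)\supseteq I_v$ for some vertex $v$ cannot be extracted from sign changes in the top $d-1$ coefficients alone, and the paper does not attempt anything like it. The argument is instead \emph{local}: one first uses $p_0\ge 0$ together with the Hausdorff convergence $S_k\to P$ to confine $x$ to a small enlargement $P_{\eps_3}$; then a second-order expansion of $\sigma_{m-d+1}(q(x))$ and $\sigma_{m-d+2}(q(x))$ near each vertex shows that, inside $P_{\eps_3}$, the constraints $p_1,\ldots,p_{d-1}\ge 0$ force $x\in P\cup\bigcup_v C_v$. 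The orthant characterization via $\sigma_1,\ldots,\sigma_m\ge 0$ does enter, but only \emph{after} the local analysis has supplied $\sigma_{m-d+1}(q(x))\ge 0$ and $\sigma_i(q(x))>0$ for $1\le i\le m-d$; it is not used as a direct sign-counting argument on the last $d-1$ coefficients.
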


We notice that for $p_0(x)$ from Theorem~\ref{p-rep:simp}, $p_i(x)$ vanishes on each $i$-face of $P$ for $i \in \{0,\ldots,d-1\}.$  As a direct consequence of  Theorem~\ref{p-rep:simp} we obtain that the polynomials $p_i(x), \ 0 \le i \le d-1,$ from Theorem~\ref{p-rep:simp} represent the interior of $P.$ Thus, there exists a constructive proof of \eqref{mSo:bound} for the special case when $S_0$ is the interior of a simple polytope.

As a consequence of  the \notion{Positivstellensatz} it can be derived that every  polynomial $p(x)$ which is non-negative on $P$ can be represented by  
	\begin{equation*}
		p(x)=\sum_{l} f_l(x) \sum_{j=1}^m q_j(x)^{l(j)},
	\end{equation*}
	where $l$ ranges over maps from $\{1,\ldots,m\}$ to $\natur \cup \{0\}$ and $f_l$ are non-negative polynomials on $\E^d$ (see \RAGbook[p.~106]). In our construction the polynomials are even of a more specific type, namely, such that $f_l(x)=\const$ for every $l.$ It turns out that it is reasonable to consider the polynomials of these form, see \GroetschelHenk[p.~487], \cite{MR929582}, and \cite{MR1854339}. In fact, such polynomials were also used in the previous papers.

The paper is organized as follows. In Section~\ref{sec:ex} we illustrate the statement of Theorem~\ref{p-rep:simp} by several examples. In Section~\ref{proof:qualit} we give the proof of Theorem~\ref{p-rep:simp}.  Estimates which allow to explicitely determine the possible choice of the parameter $k$ involved in the construction of $p_0(x)$ are given in Section~\ref{sec:proof:quant}. 

\section{Examples of polynomial representations} \label{sec:ex}

Let us illustrate the case $d=2.$ This case was completely settled by Bernig. Since convex polygons are simple polytopes, the case $d=2$ is also covered by Theorem~\ref{p-rep:simp}. The polynomial $p_0(x)$ describes a semi-algebraic set $\setcond{x \in \E^d}{p_0(x) \ge 0}$ which is sufficiently close to $P.$  In \Bernig it was proved that if $P$ is a convex $m$-gon given by affine inequalities $q_1(x) \ge 0,\ldots, q_m(x) \ge 0,$ then a strictly concave polynomial $p_0(x)$ vanishing on each vertex of $P$ can be constructed such that $p_0(x)$ together with the polynomial $p_1(x):=q_1(x) \cdot\ldots \cdot q_m(x)$ form a polynomial representation of $P$ (see also Fig.~\ref{bern:2d:fig}).

	\begin{FigTab}{c}{0.6mm}
	\begin{picture}(52,50)
	\put(0,2){\IncludeGraph{width=50\unitlength}{BernigsConstruction.eps}}
	\put(20,25){$P$}
	\end{picture}
	\\
	\parbox[t]{0.95\textwidth}{\mycaption{Bernig's construction; the region shaded by \includegraphics[height=0.3cm]{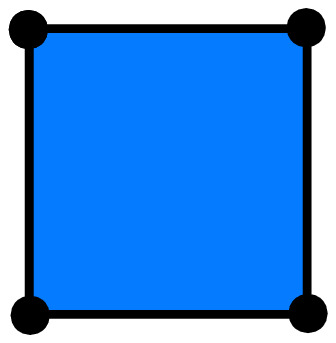} is $P ,$  \includegraphics[height=0.3cm]{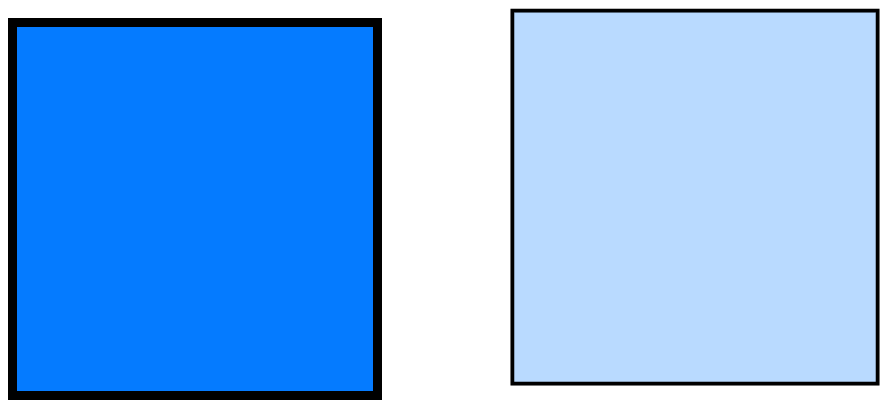}  is  $\setcond{x \in \E^2}{p_1(x)\ge 0}$ , \includegraphics[height=0.4cm]{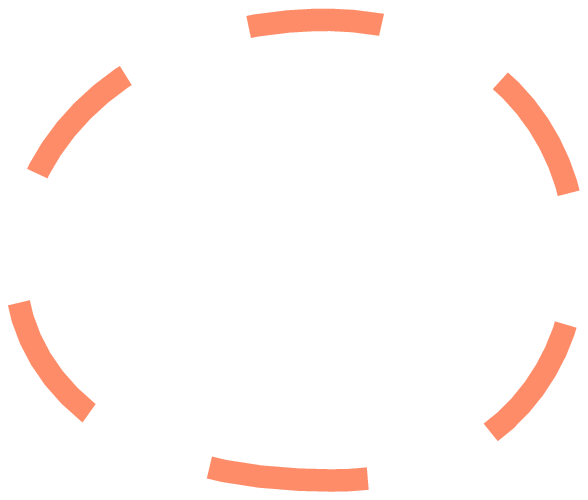} is the boundary of $\setcond{x \in \E^2}{p_0(x) \ge 0}$ \label{bern:2d:fig}}} 
	\end{FigTab}

We illustrate Theorem~\ref{p-rep:simp} for the case $d=3$ by some concrete choices of $P.$ For $J \subseteq \{0,\ldots,d-1\}$ with $J \ne \emptyset$  we use the notation $P_J:=\setcond{x \in \E^d}{p_j(x) \ge 0 \mfor j \in J}.$ By Theorem~\ref{p-rep:simp} one has $P=P_J$ for $J=\{0,\ldots,d-1\}.$ 

If $P$ is a regular tetrahedron with vertices 
\begin{align*}
	v_1 &:=[1,-1,1]^\transp, & v_2&:=[-1,1,1]^\transp, \\ \ v_3&:=[1,1,-1]^\transp, & v_4&:=[-1,-1,-1]^\transp,
\end{align*}
then we can choose 
\begin{align*}
	q_1(x) & :=    1+x_{{1}}-x_{{2}}+x_{{3}} , &
	q_2(x) & := 1-x_{{1}}+x_{{2}}+x_{{3}} ,\\
	q_3(x) & :=  1+x_{{1}}+x_{{2}}-x_{{3}}, &
	q_4(x) & :=  1-x_{{1}}-x_{{2}}-x_{{3}}.
\end{align*}
In this case 
\begin{align*}
	p_2(x) & =q_1(x) q_2(x) q_3(x) q_4(x) \\
		    &= 1-2\,{x_{{1}}}^
{2}-2\,{x_{{2}}}^{2}-2\,{x_{{3}}}^{2}-8\,x_{{1}}x_{{2}}x_{{3}}-2\,{x_{{1}}}^{2}{x_{{2}}}^{2}-2
\,{x_{{1}}}^{2}{x_{{3}}}^{2}-2\,{x_{{2}}}^{2}{x_{{3}}}^{2}+{x_{{1}}}^{4}+{x_{{2}}}^{4}+{x_{{3}}}^{4} \\
	p_1(x) &=q_1(x) q_2(x) q_3(x)+q_1(x) q_2(x) q_4(x) + q_1(x) q_3(x) q_4(x) + q_2(x) q_3(x) q_4(x) \\
		    &= 4 \, (1-{x_{{1}}}^{2}-{x_{{2}}}^{2}-{x_{{3}}}^{2}-2\,x_{{1}}x_{{2}}x_{{3}}),
\end{align*}
and thus the boundary of $P_1$ is the well-known \emph{Cayley cubic}. Fig.~\ref{TetrahedronDiag} depicts all possible $P_J$ in a diagram where an arrow is drawn from the image of $P_{J_1}$ to the image of $P_{J_2}$ whenever $J_1 \subseteq J_2.$ We wish to illustrate the properties of $p_1(x), p_2(x)$ from Theorem~\ref{p-rep:simp} rather than the properties of $p_0(x).$  Therefore, we choose $p_0(x)$  having a simpler form than in Theorem~\ref{p-rep:simp}, namely $p_0(x):=3-x_1^2-x_2^2-x_3^2$ so that $P_0$ is a ball of radius $\sqrt{3}.$ 

\begin{FigTab}{cc}{0.6mm}
\begin{picture}(132,160)
\put(35,55){\IncludeGraph{width=55\unitlength}{Tetrahedron.eps}}
	\put(-15,30){\IncludeGraph{width=55\unitlength}{TetrahedronP0.eps}}
\put(90,30){\IncludeGraph{width=55\unitlength}{TetrahedronP1.eps}}
\put(35,120){\IncludeGraph{width=55\unitlength}{TetrahedronP2.eps}}
\put(-15,90){\IncludeGraph{width=55\unitlength}{TetrahedronP02.eps}}
\put(90,90){\IncludeGraph{width=55\unitlength}{TetrahedronP12.eps}}
\put(35,0){\IncludeGraph{width=55\unitlength}{TetrahedronP01.eps}}
\put(5,10){\IncludeGraph{width=110\unitlength}{PDiagram.eps}}
\put(50,60){$P$}
\put(0,35){$P_0$}
\put(105,35){$P_1$}
\put(45,120){$P_2$}
\put(0,95){$P_{0,2}$}
\put(105,95){$P_{1,2}$}
\put(50,5){$P_{0,1}$}
\end{picture}
\\
\parbox[t]{0.9\textwidth}{\mycaption{\label{TetrahedronDiag}}}
\end{FigTab}

Now let $P$ be the cube given by $P:=\setcond{x \in \real^3}{|x_i| \le 1 \mfor 1 \le i \le 3}.$ Then we can take       
\begin{align*}
	q_1(x) &:= -x_1, & q_2(x)&:=-x_2, & q_3(x)&:=-x_3, \\
	q_4(x) &:= +x_1, & q_5(x)&:=+x_2, & q_6(x)&:=+x_3.	
\end{align*}
We have 
\begin{align*}
p_2(x) &= (1-{x_1}^2) (1 - {x_2}^3) (1-{x_3}^2), \\
p_1(x) &= 2(3-2\,{x_{{1}}}^{2}-2\,{x_{{2}}}^{2} -2\,{x_{
{3}}}^{2}+{x_{{1}}}^{2}{x_{{2}}}^{2}+{x_{{1}}}^{2}{x_{{3}}}^{2}+{x_{{2}}}^{2}{x_{{3}}}^{2}).
\end{align*}
We can choose $p_0(x)$ in the same way as for the previous example. The diagram depicting $P_J$ is given in Fig.~\ref{CubeDiag}. One can see that the boundary of $P_1$ is a surface sharing some properties with the Cayley cube, namely every vertex of $P$ is  the \notion{conic double point} of the mentioned surface. Thus, for a general simple $3$-polytope $P$ the boundary of $P_1$ can be viewed as a generalized Cayley surface assigned to $P.$ Singularities of algebraic surfaces are discussed in \cite[Section~5 of Chapter~I]{MR1336146}, \cite[Chapters~3,4]{MR1805816}, and \cite[Section~A.9]{MR1930604}. 

\begin{FigTab}{cc}{0.6mm}
\begin{picture}(132,160)
\put(35,55){\IncludeGraph{width=55\unitlength}{Cube.eps}}
\put(-15,30){\IncludeGraph{width=55\unitlength}{CubeP0.eps}}
\put(90,30){\IncludeGraph{width=55\unitlength}{CubeP1.eps}}
\put(35,120){\IncludeGraph{width=55\unitlength}{CubeP2.eps}}
\put(-15,90){\IncludeGraph{width=55\unitlength}{CubeP02.eps}}
\put(90,90){\IncludeGraph{width=55\unitlength}{CubeP12.eps}}
\put(35,0){\IncludeGraph{width=55\unitlength}{CubeP01.eps}}
\put(5,10){\IncludeGraph{width=110\unitlength}{PDiagram.eps}}
\put(45,55){$P$}
\put(-5,30){$P_0$}
\put(100,30){$P_1$}
\put(45,120){$P_2$}
\put(-5,90){$P_{0,2}$}
\put(100,90){$P_{1,2}$}
\put(45,0){$P_{0,1}$}
\end{picture}
\\
\parbox[t]{0.9\textwidth}{\mycaption{\label{CubeDiag}}}
\end{FigTab}

\section{The proof} \label{proof:qualit} 

\subsection{Preliminaries}


In what follows, $P$ is a $d$-polytope in $\E^d$ and $\calF_i$ denotes the class of all $i$-faces of $P.$ Given $F \in \calF_{d-1},$ $u_F$ stands for the outward unit normal of $P$  at the facet $F.$ By $\diam(P)$ we denote the diameter of $P,$ which is equal to the largest possible distance between two vertices of $P.$  With each $F \in \calF_{d-1}$ we associate affine functions 
\begin{equation*}
	q_F(x)  := \frac{h(P,u_F)-\sprod{u_F}{x}}{\diam(P)}, 
\end{equation*}
where 
$$
	h(P,u) := \max \setcond{ \sprod{x}{u} }{x \in P} , \qquad u \in \E^d,
$$
is the \notion{support function} of $P.$ We have $0 \le q_F(x) \le 1$ with $q_F(x)=0$ for all $x \in F.$ 
In what follows $m$ always denotes the number of facets in $P.$ 

In many cases we shall consider matrices and vectors indexed by the elements of $\calF_{d-1}$ and $\vertx(P)$ rather then by segments of natural numbers, which is possible if some linear order on each of these two classes is assumed to be fixed. For example, we introduce the affine mapping
\begin{eqnarray*}
	q(x) := [q_F(x)]_{F \in \calF_{d-1}} = [q_{F_1}(x),\ldots,q_{F_m}(x)]^\transp, 
\end{eqnarray*}
where $F_1,\ldots,F_m$ is a sequence of all facets of $P$ that determines an order on $\calF_{d-1}.$ For each $v \in \vertx(P)$ we also introduce the set
$$\calF_{d-1}^v:=\setcond{F \in \calF_{d-1}}{v \in F}$$
and the affine functions
\begin{eqnarray*}
	q_v(x) & := & [q_v(x)]_{v \in \calF_{d-1}^v}, \\
	\bar{q}_v(x) & := & [q_v(x)]_{v \in \calF_{d-1} \setminus \calF_{d-1}^v}.
\end{eqnarray*}


\subsection{Lemmas on $\eps_1, \ \eps_2, \ \eps_3$}

Given $\eps > 0$ consider the polytope
\begin{equation*}
	P_\eps := \setcond{x \in \E^d}{ q_F(x) \ge -\eps \mfor F \in \calF_{d-1}},
\end{equation*}
see also Fig.~\ref{PepsFig}.

	\begin{FigTab}{c}{0.6mm}
	\begin{picture}(42,36)
	\put(0,0){\IncludeGraph{width=40\unitlength}{Peps.eps}}
	\put(20,20){$P$}
	\put(9,0){$P_\eps$}
	\end{picture} 
	\\
	\parbox[t]{0.40\textwidth}{\mycaption{\label{PepsFig}}}
	\end{FigTab}

\newcommand{\SigmaQIneq}{\sigma_i(q(x)) \ge 0 \ \mbox{for} \ m-d+2 \le i \le m}

\begin{lemma} \label{07.06.19,09:55} Let $P$ be a simple $d$-polytope. Then there exists an $\eps_1>0$ such that  $\sigma_i(q(x))>0$ for $1 \le i \le m-d$ and $x \in P_{\eps_1}.$
\end{lemma}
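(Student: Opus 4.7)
The plan is to exploit the simplicity of $P$ to show that at every point of $P$ the tuple $q(x)$ has at most $d$ zero coordinates, deduce strict positivity of $\sigma_i(q(x))$ on $P$ for $i \le m-d$, and then extend from $P$ to a small outer neighborhood $P_{\eps_1}$ by a continuity-plus-compactness argument.

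First I would establish the following structural claim: for every $x \in P$, at most $d$ of the affine forms $q_{F_1}(x),\ldots,q_{F_m}(x)$ vanish. Indeed, any $x \in P$ lies in the relative interior of a unique face $G$ of some dimension $k \in \{0,\ldots,d\}$, and in a simple $d$-polytope $G$ is the intersection of exactly $d-k$ facets. Consequently exactly $d-k$ of the values $q_F(x)$ equal $0$ and the remaining $m-(d-k) = m-d+k$ of them are strictly positive. In particular at most $d$ of the $q_F(x)$ vanish.

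Next, for $1 \le i \le m-d$ and any $x \in P$, at least $m-d \ge i$ of the values $q_{F_1}(x),\ldots,q_{F_m}(x)$ are strictly positive while all of them are nonnegative. Hence at least one of the $\binom{m}{i}$ monomials in the definition \eqref{sigma_k def eq} of $\sigma_i(q(x))$ is strictly positive and all others are nonnegative, so $\sigma_i(q(x))>0$ on $P$. By continuity of $\sigma_i \circ q$ and compactness of $P$, there is a uniform constant $\delta>0$ with
\begin{equation*}
  \sigma_i(q(x)) \ge \delta \quad \text{for all } x \in P \text{ and all } i \in \{1,\ldots,m-d\}.
\end{equation*}

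Finally I would extend this to $P_{\eps_1}$ by a standard compactness argument. The polyhedra $P_\eps$ are uniformly bounded for $\eps$ in a bounded range (say $\eps \le 1$), and $P_\eps$ converges to $P$ in the Hausdorff metric as $\eps \to 0^+$. Suppose for contradiction that no such $\eps_1$ exists; then for each $n \in \natur$ there is some $i_n \in \{1,\ldots,m-d\}$ and some $x_n \in P_{1/n}$ with $\sigma_{i_n}(q(x_n)) \le 0$. Passing to a subsequence we may assume $i_n$ is constant equal to $i$ and $x_n \to x_\ast$, and $x_\ast \in P$ because $P = \bigcap_n P_{1/n}$. Continuity then gives $\sigma_i(q(x_\ast)) \le 0$, contradicting the uniform bound $\sigma_i(q(x_\ast)) \ge \delta$. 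The main (really only) obstacle here is bookkeeping the simple-polytope face count correctly to obtain the key inequality that at most $d$ coordinates of $q(x)$ vanish on $P$; everything after that is soft continuity and compactness.
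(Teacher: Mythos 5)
Your proof is correct and follows essentially the same route as the paper: use simplicity of $P$ to bound the number of vanishing coordinates of $q(x)$ by $d$ on $P$, deduce that for $1\le i\le m-d$ some monomial in $\sigma_i(q(x))$ is strictly positive while all are nonnegative, and then pass to $P_{\eps_1}$ by continuity. The only difference is presentational: you spell out the final continuity-and-compactness step (uniform lower bound plus a subsequence argument) that the paper dispatches in a single sentence.
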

\begin{proof}
	By \eqref{sigma_k def eq} we see that $\sigma_i(q(x))>0$ for $1 \le i \le m-d$ and $x \in P.$ In fact, in view of  \eqref{sigma_k def eq} the polynomial $\sigma_i(q(x))$ is given as a sum, where each summand is represented as a product of at most $m-d$ polynomials from the class $q_F(x), \ F \in \calF_{d-1}.$  But for $x \in P$ all values $q_F(x), \ F \in \calF_{d-1},$ are non-negative and at most $d$ values from $q_F(x), \ F \in \calF_{d-1},$ vanish. Consequenlty, at least one of the mentioned summands is strictly positive and hence $\sigma_i(q(x))>0.$ In view of the continuity of $\sigma_i(q(x))$ we obtain the assertion.
\end{proof}

	The non-negative orthant can be represented as the set where all elementary symmetric functions are non-negative. In \Bernig this statement was derived from the \notion{Descartes' rule of signs} (see    \RAGbook[Proposition~1.2.14] for the statement and a short proof of the Descartes' rule). Below we give an alternative direct proof.
\begin{proposition}  \ThmSource{Bernig, \Bernig[p.~38]} \label{orth repr}
	Let $d \ge 2.$ Then 
	\begin{equation*}
		\setcond{x \in \real^d}{x_1 \ge 0, \ldots, x_d \ge 0} = \setcond{x \in \real^d}{ \sigma_1(x) \ge 0, \ldots, \sigma_d(x) \ge 0}.
	\end{equation*}	
\end{proposition}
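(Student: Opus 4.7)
The inclusion $\supseteq$ is immediate from \eqref{sigma_k def eq}: on the non-negative orthant each $\sigma_l(x)$ is a sum of products of non-negative numbers, hence non-negative. Only the converse inclusion requires genuine argument, so I will focus on that.

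For the converse, my plan is to pass from the multivariate $x = (x_1,\ldots,x_d)$ to an auxiliary univariate polynomial in a fresh variable $t$ whose coefficients are, up to sign, the elementary symmetric polynomials in $x$. Concretely, I will introduce
\begin{equation*}
p(t) \;:=\; \prod_{i=1}^{d} (t + x_i) \;=\; \sum_{k=0}^{d} \sigma_k(x)\, t^{d-k},
\end{equation*}
where the second equality is Vieta's formulas together with $\sigma_0(x)=1$. By construction, the real roots of $p$ (counted with multiplicity) are precisely $-x_1,\ldots,-x_d$.

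Now suppose $x\in\real^d$ satisfies $\sigma_l(x)\ge 0$ for every $l\in\{1,\ldots,d\}$. Combined with $\sigma_0(x)=1$, this means that every coefficient of $p(t)$ is non-negative. Consequently, for any $t>0$ the strict lower bound
\begin{equation*}
p(t) \;\ge\; t^{d} \;>\; 0
\end{equation*}
holds, because the leading term is strictly positive and all remaining summands are non-negative. Therefore $p$ has no positive real root, which forces $-x_i \le 0$, i.e.\ $x_i \ge 0$, for every $i\in\{1,\ldots,d\}$, yielding the desired inclusion.

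The only substantive step is noticing the right auxiliary polynomial; once $p(t)$ is written down and Vieta's formulas identify its coefficients with the $\sigma_k(x)$, the conclusion reduces to the one-line sign observation that a univariate polynomial with non-negative coefficients and positive leading coefficient admits no positive real root. I do not expect any real obstacle, and in particular this route avoids invoking Descartes' rule of signs, in line with the author's announced wish to give a direct proof.
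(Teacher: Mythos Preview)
Your argument is correct and is essentially identical to the paper's own proof: both introduce the auxiliary polynomial $p(t)=\prod_{i=1}^d (t+x_i)$, use Vieta's formulas to identify its coefficients with the $\sigma_k(x)$, and observe that non-negative coefficients preclude positive real roots, forcing $-x_i\le 0$. The only slip is a label swap: what you call the ``$\supseteq$'' inclusion (and justify by noting that $\sigma_l$ is non-negative on the orthant) is actually the ``$\subseteq$'' inclusion, and conversely; the paper uses the opposite labeling, which matches the argument you give.
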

\begin{proof}
The inclusion ``$\subseteq$'' is trivial. Let us prove the reverse inclusion. Assume that $\sigma_i(x) \ge 0$ for $1 \le i \le d.$ Let $p(t):=(t+x_1)\cdots (t+x_d).$ By \notion{Vieta's formulas} 
\begin{equation*} 
 	p(t)= \sum_{i=0}^n \sigma_{n-i}(x) t^{i},
\end{equation*}
The polynomial $p(t)$ is not identically equal to zero. Since all its coefficients are non-negative, it cannot have positive real roots. Thus, all its roots $-x_j, 1 \le j \le d,$ are non-positive, and we are done.
\end{proof}

We observe that $\sigma_i(x) = O(|x|^i), \ 1 \le i \le d,$ since $\sigma_i(x) \le |x|^i \max \setcond{\sigma_i(u)}{u \in \E^d, \ |u|=1 }.$ Notice also that for $x \in \E^{n_1}, \ y \in \E^{n_2},$ and $z:=[x_1,\ldots,x_{n_1},y_1,\ldots,y_{n_2}]^\transp \in \E^{n_1+n_2}$ one has
\begin{equation} \label{sigma convol} 
	\sigma_i(z) = \sum_{j=-\infty}^{+\infty} \sigma_{i-j}(x) \sigma_{j}(y),
\end{equation}
where $1 \le i \le n_1+n_2.$ In \eqref{sigma convol} only the items with $0 \le i-j \le n_1$ and $0 \le j \le n_2$ (equivalently  $\max \{0,n_1-i\} \le j \le \min \{n_2,i\}$) can be non-zero.

Given $v \in \vertx(P)$ and $\eps > 0$ we introduce the sets 
\begin{eqnarray*}
	\Pi_{v,\eps} &:=& \setcond{x \in \E^d}{|q_v(x)|_\infty \le \eps}, \\
	C_v &:=&  \setcond{x \in \E^d}{-\sigma_1(q_v(x)) \ge \frac{2}{3} |q_v(x)| },
\end{eqnarray*}
see Figs.~\ref{PivepsFig}, \ref{CvFig}.

	\begin{FigTab}{cc}{0.8mm}
	\begin{picture}(30,38)
	\put(0,9){\IncludeGraph{width=25\unitlength}{Piveps.eps}}
	\put(14,20){$P$}
	\put(0,9){\small $\Pi_{v,\eps}$}
        \end{picture}
	&
	\begin{picture}(48,38)
	\put(0,2){\IncludeGraph{width=46\unitlength}{Cv.eps}}
	\put(6,8){\small $C_v$}
	\put(30,8){\small $2v-C_v$}
	\put(24,20){$P$}
	\put(14,14.5){\small $v$}
        \end{picture}
	\\
	\parbox[t]{0.40\textwidth}{\mycaption{\label{PivepsFig}}}
	&
	\parbox[t]{0.40\textwidth}{\mycaption{\label{CvFig}}}
	\end{FigTab}

It can be seen that  $\Pi_{v,\eps}$ is a small polytope enclosing $v.$ The set $C_v$ is a convex cone with apex at $v.$ This follows from the fact that the function $\frac{2}{3} |z| - \sigma_1(z), \ z \in \E^d,$ is sublinear (see \SchnBk[p.26]). Furthermore, $C_v \cap P=\{v\}$ and 
$$P \subseteq 2v-C_v = \setcond{x \in \E^d}{\sigma_1(q_v(x)) \ge \frac{2}{3} |q_v(x)|}.$$ 
Notice that $2v-C_v$ is the reflection of $C_v$ with respect to $v.$

\begin{lemma} \label{around:vx:lem}
	Let $P$ be a simple $d$-polytope. Then there exists an $\eps_2>0$ such that for every $v \in \vertx(P)$ 
	\begin{equation*} 
		\setcond{x \in \Pi_{v,\eps_2} }{ \sigma_i(q(x)) \ge 0 \mfor m-d+2 \le i \le m} \subseteq P \cup C_v.
	\end{equation*}
\end{lemma}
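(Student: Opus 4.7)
The plan combines the convolution identity~\eqref{sigma convol} with the universal quadratic identity $\sigma_2(y)=\tfrac12(\sigma_1(y)^2-|y|^2)$ to produce the dichotomy matching the two parts of $P\cup C_v$.

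First I would shrink $\eps_2$ so that, by continuity of $\bar q_v$ and the fact that $\bar q_v(v)>0$ componentwise, every coordinate of $\bar q_v(x)$ stays bounded below by a positive constant on $\Pi_{v,\eps_2}$. Then $B_0(x):=\sigma_{m-d}(\bar q_v(x))\ge A>0$ uniformly and the ratios $u_j(x):=\sigma_{m-d-j}(\bar q_v(x))/B_0(x)$ lie in a bounded positive interval. Applying \eqref{sigma convol} to $z=(q_v(x),\bar q_v(x))$ yields, for $k=2,\ldots,d$,
\begin{equation*}
 \sigma_{m-d+k}(q(x))=B_0(x)\Bigl(\sigma_k(q_v(x))+\sum_{j=1}^{d-k}u_j(x)\,\sigma_{k+j}(q_v(x))\Bigr).
\end{equation*}
Combined with the trivial bound $|\sigma_l(q_v(x))|\le\binom{d}{l}|q_v(x)|^l$, the hypothesis therefore translates into $\sigma_d(q_v(x))\ge 0$ (exactly, from $k=d$ where the inner sum is empty) and $\sigma_k(q_v(x))\ge -C_k|q_v(x)|^{k+1}$ for $k=2,\ldots,d-1$, with polytope-dependent constants $C_k$.

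Plugging the $k=2$ estimate into $\sigma_2(y)=\tfrac12(\sigma_1(y)^2-|y|^2)$ gives $\sigma_1(q_v(x))^2\ge|q_v(x)|^2(1-2C_2|q_v(x)|)$. One more shrinking of $\eps_2$ (forcing $2C_2|q_v(x)|<5/9$ on $\Pi_{v,\eps_2}$) then produces $|\sigma_1(q_v(x))|>\tfrac23|q_v(x)|$. Therefore either $\sigma_1(q_v(x))\le-\tfrac23|q_v(x)|$, in which case $x\in C_v$ by definition, or $\sigma_1(q_v(x))\ge\tfrac23|q_v(x)|$, which is the hard alternative.

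In the hard case the goal is $q_v(x)\ge 0$, i.e.\ $x\in P$. I would argue by contradiction: assume that some coordinate of $q_v(x)$ is strictly negative. Since $\sigma_d(q_v(x))=\prod_i q_v(x)_i\ge 0$, the number of strictly negative coordinates is even and hence at least~$2$. On the set where $\sigma_d(q_v(x))>0$ the identity $\sigma_{d-1}(y)=\sigma_d(y)\sum_i 1/y_i$ converts the condition $\sigma_{m-1}(q(x))\ge 0$ into $\sum_i 1/q_v(x)_i\ge -u_1(x)$; since $u_1(x)$ is uniformly bounded on $\Pi_{v,\eps_2}$, a normalized compactness argument on $\hat y_n:=q_v(x_n)/|q_v(x_n)|$ along a putative counterexample sequence, together with Proposition~\ref{orth repr} applied to the subsequential limit in $\mathbb{S}^{d-1}$, forces the magnitudes of the negative coordinates of $q_v(x)$ to be bounded below by a fixed polytope-dependent constant; shrinking $\eps_2$ below that constant gives the contradiction. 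The boundary subcase $\sigma_d(q_v(x))=0$ (some $q_v(x)_i=0$, i.e.\ $x$ lies on a facet through $v$) is handled by reducing to the corresponding face of $P$ and inducting on $d$, the base case $d=2$ being immediate from $\sigma_d(q_v)\ge 0$ combined with Proposition~\ref{orth repr}.

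The main obstacle is this final step: the hypothesis only provides approximate, $-O(|q_v|^{k+1})$-sized, lower bounds on the $\sigma_k(q_v(x))$, so a direct appeal to Proposition~\ref{orth repr} is unavailable and one must instead exploit the exact algebraic identity $\sigma_{d-1}/\sigma_d=\sum 1/y_i$ together with the exact inequality $\sigma_{m-1}(q(x))\ge 0$ to rule out the scenario of a few tiny negative coordinates compensated by further small positive ones.
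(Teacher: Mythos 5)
Your opening moves are precisely the paper's: expanding $\sigma_{m-d+1}(q(x))$ and $\sigma_{m-d+2}(q(x))$ via~\eqref{sigma convol}, rewriting $\sigma_2(q_v)=\tfrac12(\sigma_1(q_v)^2-|q_v|^2)$, and extracting the dichotomy $|\sigma_1(q_v(x))|\ge\tfrac23|q_v(x)|$ after shrinking $\eps_2$ so that $\sigma_{m-d}(\bar q_v(x))$ is bounded below and the remainder terms are controlled. Up to the dichotomy you are on exactly the right track.

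The gap is the ``hard case.'' You try to conclude $q_v(x)\ge 0$ componentwise, i.e.\ to apply Proposition~\ref{orth repr} to the $d$-dimensional vector $q_v(x)$, but as you correctly observe the hypotheses only give approximate lower bounds $\sigma_k(q_v(x))\ge -C_k|q_v(x)|^{k+1}$ for $2\le k\le d-1$, so a direct appeal fails. Your rescue attempt does not close: the normalized limit $\hat y=\lim q_v(x_n)/|q_v(x_n)|$ satisfies $\sigma_k(\hat y)\ge 0$ for all $k$ and hence $\hat y\ge 0$ by Proposition~\ref{orth repr}, but this in no way contradicts the existence of strictly negative coordinates of $q_v(x_n)$ of size $o(|q_v(x_n)|)$ for every $n$. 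The further manoeuvre via $\sigma_{d-1}/\sigma_d=\sum 1/y_i$ is stated, not proved, and breaks down exactly on the boundary stratum $\sigma_d(q_v(x))=0$, which you relegate to an unexecuted induction on dimension. As it stands the argument does not establish $x\in P$.

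The paper's resolution is much shorter and avoids $q_v(x)$ entirely: it applies Proposition~\ref{orth repr} to the \emph{full} vector $q(x)\in\E^m$, not to $q_v(x)\in\E^d$. The inequalities $\sigma_i(q(x))\ge 0$ for $m-d+2\le i\le m$ are assumed; the inequalities $\sigma_i(q(x))\ge 0$ for $1\le i\le m-d$ come from Lemma~\ref{07.06.19,09:55} since $\Pi_{v,\eps_v}\subseteq P_{\eps_1}$; and the single missing inequality $\sigma_{m-d+1}(q(x))\ge 0$ is read off directly from the hard alternative: with $\sigma_{m-d+1}(q(x))=\sigma_1(q_v(x))\sigma_{m-d}(\bar q_v(x))+r_1(x)$ and $|r_1(x)|\le\tfrac13|q_v(x)|\sigma_{m-d}(\bar q_v(x))$, the bound $\sigma_1(q_v(x))\ge\tfrac23|q_v(x)|$ immediately gives $\sigma_{m-d+1}(q(x))\ge\tfrac13|q_v(x)|\sigma_{m-d}(\bar q_v(x))\ge 0$. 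Then Proposition~\ref{orth repr} yields $q_F(x)\ge 0$ for all $F\in\calF_{d-1}$, i.e.\ $x\in P$. You already had every ingredient for this (the $r_1$ bound, the hard-case $\sigma_1$ bound, and Lemma~\ref{07.06.19,09:55}); what you missed was that $\sigma_{m-d+1}(q(x))\ge 0$ is the only inequality you need to supply, after which the problem is in $\E^m$ where Proposition~\ref{orth repr} applies exactly.
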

\begin{proof}
	Let $\eps_1$ be as in the statement of Lemma~\ref{07.06.19,09:55} and let us consider an arbitrary $v \in \vertx(P).$ We have
	\begin{align} 
	\sigma_{m-d+1}(q(x)) & \stackrel{\eqref{sigma convol}}{=} \sum_{i=1}^{+\infty} \sigma_i(q_v(x)) \sigma_{m-d+1-i}(\bar{q}_v(x)) = \sigma_1(q_v(x)) \sigma_{m-d}(\bar{q}_v(x)) + r_1(x) \label{07.03.02,15:51} \\
		\sigma_{m-d+2}(q(x)) & \stackrel{\eqref{sigma convol}}{=}  \sum_{i=2}^{+\infty} \sigma_i(q_v(x)) \sigma_{m-d+2-i}(\bar{q}_v(x)) = \sigma_2(q_v(x)) \sigma_{m-d}(\bar{q}_v(x)) + r_2(x) \nonumber \\
					& \stackrel{\phantom{\eqref{sigma convol}}}{=} \frac{1}{2} \sigma_{m-d}(\bar{q}_v(x)) \left( \sigma_1(q_v(x))^2-|q_v(x)|^2 \right) + r_2(x) \nonumber \\
						        & \stackrel{\phantom{\eqref{sigma convol}}}{=} g_1(x) \left( g_2(x) \sigma_1(q_v(x))^2 - |q_v(x)|^2 \right),  \label{07.03.02,13:30}
	\end{align}
	where the functions
	\begin{align*}
		r_1(x) &:= \sum_{i=2}^{+\infty} \sigma_i(q_v(x)) \sigma_{m-d+1-i}(\bar{q}_v(x)), & 
		g_1(x) &:=\frac{1}{2} \sigma_{m-d}(\bar{q}_v(x)) - \frac{r_2(x)}{|q_v(x)|^2}, \\
		r_2(x) & := \sum_{i=3}^{+\infty} \sigma_i(q_v(x)) \sigma_{m-d+2-i}(\bar{q}_v(x)),  &
		g_2(x) &:= \frac{\sigma_{m-d}(\bar{q}_v(x))}{\sigma_{m-d}(\bar{q}_v(x)) - 2 \frac{r_2(x)}{|q_v(x)|^2}}
	\end{align*}	
	are such that 
	\begin{align*}
		r_1(x) &=O(|q_v(x)|^2), & g_1(x) & \rightarrow \frac{1}{2} \sigma_{m-d}(\bar{q}_v(v)) > 0, \\
		r_2(x) &=O(|q_v(x)|^3), & g_2(x) & \rightarrow 1,
	\end{align*}
	as $x \rightarrow v.$ Consequently, we can choose an $\eps_v$ with $0 < \eps_v \le \eps_1$ such that for every $x \in \Pi_{v,\eps_v}$ 
	\begin{eqnarray}
		q_F(x) & > & 0 \qquad \mfor F \in \calF_{d-1} \setminus \calF_{d-1}^v, \label{eps:v assump} \\
		g_1(x)&>&0, \label{c2 ineq} \\
		g_2(x) &\le& \frac{9}{4}, \label{c1c2 ineq} \\
		|r_1(x)| & \le & \frac{1}{3} |q_v(x)| \sigma_{m-d}(\bar{q}_v(x)). \label{07.07.12,16:20}
	\end{eqnarray}
	From now on, let us assume that $x$ belongs to $\Pi_{v,\eps_v}$ and satisfies 
	\begin{equation} 
		\sigma_{i}(q(x)) \ge 0,  \qquad  \ m-d+2 \le  i \le m. \label{p-repr sigma ineq}
	\end{equation}
	
	Then  
	\begin{equation*} 
		0   \stackrel{\eqref{p-repr sigma ineq}}{\le} \sigma_{m-d+2}(q(x)) \nonumber \\  \stackrel{\eqref{07.03.02,13:30}, \eqref{c2 ineq}, \eqref{c1c2 ineq}}{\le} g_1(x) \left( \frac{9}{4} \sigma_1(q_v(x))^2 - |q_v(x)|^2 \right),
	\end{equation*}
	which implies that the inequality 
	\begin{equation*} -\sigma_1(q_v(x)) \ge \frac{2}{3} |q_v(x)|
	\end{equation*}
	 or the inequality 
	\begin{equation}
		\sigma_1(q_v(x)) \ge \frac{2}{3} |q_v(x)| \label{07.08.13,11:15}
	\end{equation}
	 is fulfilled. In the former case we get $x \in C_v.$ In the latter case we have
	\begin{equation*} 
		\sigma_{m-d+1}(q(x))  \stackrel{\eqref{07.03.02,15:51}, \eqref{eps:v assump}, \eqref{07.08.13,11:15}}{\ge} \frac{2}{3} |q_v(x)| \sigma_{m-d}(\bar{q}_v(x)) + r_1(x) 
			 \stackrel{\eqref{07.07.12,16:20}}{\ge} \frac{1}{3} |q_v(x)| \sigma_{m-d}(\bar{q}_v(x)) 
			 \stackrel{\eqref{eps:v assump}}{\ge} 0.
	\end{equation*}
	In view of $\eps_v \le \eps_1$ and \eqref{eps:v assump} we get $\Pi_{v,\eps_v} \subseteq P_{\eps_1}.$  Hence, by Lemma~\ref{07.06.19,09:55}, $\sigma_i(q(x)) \ge 0$ for every $1 \le i \le m-d.$ Summarizing we get that $\sigma_i(q(x)) \ge 0$ for every $1 \le i \le m.$ But then, by Proposition~\ref{orth repr}, it follows that $q_F(x) \ge 0$ for  all $F \in \calF_{d-1},$ i.e., $x \in P.$ Thus, the assertion is valid by putting $\eps_2 := \min_{v \in \vertx(P)} \eps_v.$	
\end{proof}

\begin{lemma} \label{eps3:lem}
  Let $P$ be a simple $d$-polytope. Then there exists a scalar $\eps_3>0$ such that
	\begin{equation}
		\label{07.04.11,16:30}
		\setcond{x \in P_{\eps_3}}{\SigmaQIneq}  \subseteq  P \cup \bigcup_{v \in \vertx(P)}  C_v.
	\end{equation}
\end{lemma}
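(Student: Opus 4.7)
\emph{Proof plan.} I argue by contradiction, combining Lemma~\ref{around:vx:lem} (valid near vertices) with compactness and Proposition~\ref{orth repr}. Suppose no $\eps_3$ works; shrinking if necessary, assume $\eps_3 \le \eps_1$ so that Lemma~\ref{07.06.19,09:55} is in force. Then there is a sequence $x_n \in P_{1/n}$ satisfying $\sigma_i(q(x_n)) \ge 0$ for $m-d+2 \le i \le m$ yet $x_n \notin P \cup \bigcup_v C_v$. By compactness, along a subsequence $x_n \to x_* \in P$ (the limit belongs to $P$ since $q_F(x_n) \ge -1/n \to 0$). Since each $x_n \notin P$, the point $x_*$ lies in the relative interior of some proper face $F_0$ of $P$. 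Put $s := d - \dim F_0$, which by simplicity equals the number of facets of $P$ containing $F_0$.

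If $F_0$ is a vertex $v$, then $|q_v(x_n)|_\infty \to 0$, so $x_n \in \Pi_{v,\eps_2}$ for all large $n$, and Lemma~\ref{around:vx:lem} forces $x_n \in P \cup C_v$, contradicting the construction of the sequence.

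If $\dim F_0 \ge 1$, then $s \le d-1$. At $x_*$ one has $q_F(x_*) = 0$ for the $s$ facets containing $F_0$ and $q_F(x_*) > 0$ for the remaining $m-s$ facets. Since $m-d+1 \le m-s$,
\[
\sigma_{m-d+1}(q(x_*)) \;=\; \sum_{\substack{S \subseteq \{F : q_F(x_*) > 0\}\\ |S| = m-d+1}} \prod_{F \in S} q_F(x_*)
\]
is a non-empty sum of strictly positive terms, so $\sigma_{m-d+1}(q(x_*)) > 0$. Continuity then gives $\sigma_{m-d+1}(q(x_n)) > 0$ for all large $n$. Combined with Lemma~\ref{07.06.19,09:55} ($\sigma_l(q(x_n)) > 0$ for $l \in [1,m-d]$) and the standing hypothesis ($\sigma_l(q(x_n)) \ge 0$ for $l \in [m-d+2,m]$), this yields $\sigma_l(q(x_n)) \ge 0$ for every $l \in [1,m]$. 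Proposition~\ref{orth repr} then forces $q_F(x_n) \ge 0$ for every facet $F$, i.e.\ $x_n \in P$, contradicting $x_n \notin P$.

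The elegance of this approach rests on Proposition~\ref{orth repr}: once $s \le d-1$, the single ``unconstrained'' symmetric function $\sigma_{m-d+1}$ is automatically strictly positive at $x_*$, after which the proposition collapses the remaining task. The main (minor) point to double-check is the standard simple-polytope fact that a $j$-face is contained in exactly $d-j$ facets, so that non-vertex carriers $F_0$ indeed satisfy $s \le d-1$.
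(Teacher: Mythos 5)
Your proof is correct and uses the same key ideas as the paper's proof: away from the vertex neighborhoods $\Pi_{v,\eps_2}$ the function $\sigma_{m-d+1}(q(\cdot))$ is strictly positive (because at most $d-1$ of the $q_F$ vanish at a non-vertex boundary point of a simple polytope), so combining this with Lemma~\ref{07.06.19,09:55}, the standing inequalities, and Proposition~\ref{orth repr} forces $x \in P$, while near a vertex Lemma~\ref{around:vx:lem} applies directly. The paper packages this as an explicit continuity argument yielding $\eps_3 \le \eps_2$, whereas you phrase it as a compactness/contradiction argument with a convergent sequence; the mathematical content is the same.
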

\begin{proof}
	Let us choose scalars $\eps_1>0$ and $\eps_2>0$ with $\eps_2 \le \eps_1$ as in the statements of Lemmas~\ref{07.06.19,09:55} and \ref{around:vx:lem}, respectively.  If $x \in P,$ then $\sigma_{m-d+1}(q(x)) \ge 0$ with equality if and only if $x$ is a vertex of $P.$ This yields that $\sigma_{m-d+1}(q(x)) > 0$ for $x \in P \setminus \bigcup_{v \in \vertx(P)} \Pi_{v,\eps_2}.$ In view of the continuity of $\sigma_{m-d+1}(q(x)),$ there exists a scalar $\eps_3$ with $0 < \eps_3 \le \eps_2$ such that
	\begin{equation} \label{07.05.03 15:15}
		\sigma_{m-d+1}(q(x)) > 0 \qquad \mbox{for} \ x \in P_{\eps_3} \setminus \bigcup_{v \in \vertx(P)} \Pi_{v,\eps_2}.
	\end{equation}	
	Then \eqref{07.04.11,16:30} is fulfilled for $\eps_3$ as above. In fact, by construction $\eps_3 \le \eps_2 \le \eps_1.$  Let $x \in P_{\eps_3}$ be such that $\sigma_i(q(x)) \ge 0$ for $m-d+2 \le i \le m.$ If $x \in \Pi_{v,\eps_2}$ for some $v \in \vertx(P),$ by Lemma~\ref{around:vx:lem} we conclude that $x \in  C_v \cup P.$ Otherwise, $x \in   P_{\eps_3}  \setminus \bigcup_{v \in \vertx(P)} \Pi_{v,\eps_2},$ and by Lemma~\ref{07.06.19,09:55} together with \eqref{07.05.03 15:15} we deduce that $\sigma_i(q(x)) \ge 0$ for $1 \le i \le m.$ Hence, by Proposition~\ref{orth repr}, $q_F(x) \ge 0$ for $F \in \calF_{d-1},$  i.e., $x \in P.$
\end{proof}

\subsection{Approximation theorem and conclusion}

We introduce the vector $\OneVec:=[1,\ldots,1]^\transp$ from $\E^n, \ n \in \natur.$  The unit $n \times n$ matrix is denoted by $E.$ Whenever we use the notations $E$ and $\OneVec,$ the sizes of $E$ and $\OneVec$ are clear from the context. Whenever $x$ is a vector from $\E^n,$ the notation $x_i, \ i \in \setrange{1}{n},$ stands (if not endowed with another meaning) for the $i$-th component of $x.$ 
For $1 \le \nu \le +\infty$ the $l_\nu$-norm in $\E^n$  is denoted by $\lnorm{\dotvar}{\nu}.$  We also use $\lnorm{\dotvar}{\nu}$ to denote the $l_\nu$-norm of matrices induced by the vector $l_\nu$-norm. It is not hard to see that for a real matrix $A=[a_{ij}]_{i,j=1}^k$ one has 
\begin{equation} \label{mx:inf:nrm}
	|A|_\infty = \max_{1 \le i \le n} \sum_{j=1}^n |a_{ij}| \le (n-1) \max_{1 \le i,j \le n} |a_{ij}|,
\end{equation}
see, for example, \cite[Exercise~9 to Chapter~6]{MR0245579}. If $A$ is invertiable, $A^{-1}$ denotes the inverse of $A$ and $A^{-\transp}:=(A^{-1})^\transp = (A^\transp)^{-1}.$

Given compact sets $X$ and $Y$ in $\E^d$ the \notion{Hausdorff distance} between $X$ and $Y$ is defined to be the quantity 
$$
	\max \left\{ \max_{x \in X} \min_{y \in Y} |x-y| , \max_{y \in Y} \min_{x \in X} |x-y|  \right\}
$$
In what follows, the convergence of subsets of $\E^d$ will be understood with respect to the Hausdorff distance. 

Given a vertex $v$ of $P$ by $\deg_P (v)$ we denote the number of facets of $P$ incident to $v.$ We put $\deg(P):= \max_{v \in \vertx(P)} \deg_P(v).$ We also introduce a certain parameter $\gamma$ which is related to the so-called \notion{eccentricity} of a finite point set in a strictly convex position, which was introduced by Bernig, see \Bernig. We put
\begin{equation} \label{gamma:def} 
	\gamma := \max \setcond{1-q_F(v)}{F \in \calF_{d-1}, \ v \in \vertx(P) \setminus \vertx(F)}.
\end{equation}

The aim of the the following theorem is to present a construction of a convex algebraic surface which, on one hand, contains all vertices of a given polytope $P$ and, on the other hand, approximates the boundary of $P$ with any given precision. The proof of  Theorem~\ref{approx:qual}is a modification of arguments of  Bernig \Bernig[Theorem~3.1.2], who found a construction of a coinvex algebraic sufrace containg the vertices of a given $d$-polytope (without imposing however any approximation conditions). 

\begin{theorem} \label{approx:qual}
	Let $P$ a convex $d$-polytope. Then the following statements hold true. 
	\begin{enumerate}[I.]
	\item For all sufficiently large $k \in \natur$ there exist unique scalars $y_{v,k} >0,  \ v \in \vertx(P),$ such that the polynomial
	\begin{equation} \label{fk:def}
		f_k(x) := \sum_{v \in \vertx(P)} y_{v,k} \left( \frac{1}{\deg(v)} \sum_{F \in \calF_{d-1}^v} (1-q_F(x))^{2k} \right)^{2k}
	\end{equation}
	satisfies the conditions $f_k(w)=1 \ \forall \, w \in \vertx(P).$ Furthermore, the scalars $y_{k,v}, \ v \in \vertx(P),$ can be determined from the equation
	\begin{equation} \label{Ak:yk:eq}
		A_k y_k = \OneVec,
	\end{equation}
	where 
	\begin{eqnarray*}
	y_k &:=& [y_{v,k}]_{v \in \vertx(P)}, \\
	A_k &:=& [ A_k(w,v)]_{\overtwocond{w \in \vertx(P)}{v \in \vertx(P)}} := \left[ \left( \frac{1}{\deg(v)} \sum_{F \in \calF_{d-1}^v} (1-q_F(w))^{2k} \right)^{2k} \right]_{\overtwocond{w \in \vertx(P)}{v \in \vertx(P)}}.
	\end{eqnarray*}

	\item The semi-algebraic set 
	$$ S_k := \setcond{ x \in \E^d}{ f_k(x) \le 1} $$
	converges to $P,$ as $k \rightarrow +\infty.$
	\item For all sufficiently large $k$ and every $v \in \vertx(P)$  the equality $S_k \cap C_v = \{v\}$ holds.
	\end{enumerate}
\end{theorem}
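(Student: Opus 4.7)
For part~I, I would first observe that $A_k(v,v) = 1$ identically, since $q_F(v) = 0$ for every $F \in \calF_{d-1}^v$. For distinct vertices $w \ne v$ the identity $\bigcap_{F \in \calF_{d-1}^v} F = \{v\}$ gives some $F \in \calF_{d-1}^v$ with $w \notin F$ and hence $q_F(w) > 0$, so the inner average $\tfrac{1}{\deg(v)}\sum_F (1-q_F(w))^{2k}$ tends as $k \to \infty$ to a limit strictly below $1$, and therefore $A_k(w,v) \to 0$ exponentially. Writing $A_k = E + B_k$ with $|B_k|_\infty \to 0$, the Neumann series then yields invertibility of $A_k$ for all sufficiently large $k$, uniqueness of $y_k := A_k^{-1}\OneVec$, and $y_k \to \OneVec$; in particular each $y_{v,k}$ is strictly positive for large $k$.

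For part~II, I would prove the two inclusions in Hausdorff convergence separately. Given $\eps > 0$ and $x$ at distance greater than $\eps$ from $P$, some facet inequality $q_F \ge 0$ is violated at $x$ by at least a constant $c(\eps) > 0$; setting $g_v(x) := \tfrac{1}{\deg(v)}\sum_{F \in \calF_{d-1}^v}(1-q_F(x))^{2k}$ and picking $v$ a vertex of that facet then gives $g_v(x) \ge (1+c(\eps))^{2k}/\deg(v)$, whence $f_k(x) \ge y_{v,k} g_v(x)^{2k}$ tends to $\infty$ with $k$ and exceeds $1$ for $k$ large, yielding $S_k \subseteq P + \eps\uball$. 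Conversely, each vertex $v$ satisfies $f_k(v)=1$ and so lies in $S_k$; on the compact complement $P \setminus \bigcup_v (v + \tfrac{\eps}{2}\uball)$ the monotonicity of $(1-q_F(x))^{2k}$ in $k$ for $q_F(x) \in [0,1]$ produces a bound $g_w(x) \le c^* < 1$ uniform in $k$ (already at $k=1$), so $f_k \to 0$ uniformly there and $P \subseteq S_k + \eps\uball$ for $k$ large.

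Part~III is the main obstacle. The Jensen (convexity) bound $g_v(x) \ge (1 - \sigma_1(q_v(x))/\deg(v))^{2k}$ combined with the defining cone inequality $-\sigma_1(q_v(x)) \ge \tfrac{2}{3}|q_v(x)|$ on $C_v$ yields $g_v(x) \ge (1 + \alpha|q_v(x)|)^{2k}$ for $\alpha := 2/(3\deg(v))$. Since the normals $u_F$ with $F \in \calF_{d-1}^v$ span $\E^d$ (as $v$ is a $0$-face), the affine map $q_v$ obeys $|q_v(x)| \ge c'|x-v|$ globally for some $c'>0$. For $x \in C_v$ with $|x-v| \ge \eta$ this gives $f_k(x) \ge y_{v,k}(1 + \alpha c'\eta)^{4k^2} > 1$ for $k$ large, so the only delicate regime is $x \in C_v$ near $v$. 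There I would exploit $f_k(v)=1$ to write $f_k(x) - 1 = y_{v,k}(g_v(x)^{2k}-1) + \sum_{w \ne v} y_{w,k}(g_w(x)^{2k} - g_w(v)^{2k})$, bound the first summand below by $\ge 2k^2 \alpha c'|x-v|$ via Bernoulli's inequality (and $y_{v,k} \ge \tfrac{1}{2}$), and bound the second in absolute value by $O(k^2 \rho^{2k}|x-v|)$ for some $\rho<1$ using the mean value theorem together with $|\nabla g_w| = O(k)$ and $g_w(x) \le 1-\delta'$ in a neighborhood of $v$ for $w \ne v$. The difficulty is precisely this near-$v$ cancellation: both terms scale like $k^2|x-v|$, and only the extra decaying factor $\rho^{2k}$ in the perturbation lets the first dominate for large $k$.
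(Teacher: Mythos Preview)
Parts~I and~II are essentially correct and close to the paper's. You do miss one simplification the paper uses throughout: $f_k$ is convex (each $(1-q_F(x))^{2k}$ is convex, the inner averages $g_v$ are nonnegative and convex, and a power $\ge 1$ of a nonnegative convex function is again convex). Hence $S_k$ is convex, and $P \subseteq S_k$ follows in one line from $\vertx(P)\subseteq S_k$, without your detour through $P\setminus\bigcup_v(v+\tfrac{\eps}{2}\uball)$.

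The genuine gap is in Part~III, in the near-$v$ regime. Your bounds $g_w(\xi)\le 1-\delta'$ and $|\nabla g_w(\xi)|=O(k)$ tacitly presume $|1-q_F(\xi)|\le 1$ for the facets $F\in\calF_{d-1}^w$. But every $x\in C_v\setminus\{v\}$ lies \emph{outside} $P$, and so does each intermediate point $\xi\in(v,x)$; hence $q_F(\xi)<0$ for some facet $F$ through $v$, and such $F$ also lies in $\calF_{d-1}^w$ for any vertex $w\in F$ with $w\ne v$. On a fixed neighbourhood of radius $r$ this allows $(1-q_F(\xi))^{2k}$ to be as large as $(1+cr)^{2k}$, which blows up with $k$; neither $g_w(\xi)\le 1-\delta'$ nor $|\nabla g_w(\xi)|=O(k)$ holds uniformly in $k$ there. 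Your scheme can be salvaged by letting the ``near'' radius shrink like $c/k$ (then $(1+c'/k)^{2k}$ stays bounded, while the ``far'' estimate $(1+\alpha c'\eta_k)^{4k^2}\to\infty$ still goes through because $k^2\eta_k\to\infty$), but this balancing must be made explicit and is more delicate than your proposal suggests.

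The paper avoids this entirely by exploiting convexity once more. It evaluates the gradient \emph{only at the vertex} $v$, where everything is clean: the main term in $\tfrac{1}{4k^2}\nabla f_k(v)$ is $\tfrac{y_{v,k}}{\deg(v)\,\diam(P)}\sum_{F\in\calF_{d-1}^v}u_F$, whose pairing with $x-v$ equals $-\tfrac{y_{v,k}}{\deg(v)}\sigma_1(q_v(x))\ge \tfrac{2y_{v,k}}{3\deg(v)}|q_v(x)|$ on $C_v$; the residual sum over $w\ne v$ carries the factors $A_k(v,w)^{(2k-1)/(2k)}\to 0$ and is $o(|q_v(x)|)$. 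Thus $\langle\nabla f_k(v),x-v\rangle>0$ for all $x\in C_v\setminus\{v\}$ once $k$ is large. Convexity of $f_k$ then gives $f_k(x)\ge f_k(v)+\langle\nabla f_k(v),x-v\rangle>1$, so $x\notin S_k$. No near/far split, no mean-value theorem at off-polytope points, and no $k$-dependent neighbourhoods are needed.
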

\begin{proof}
	\emph{I.}  For every $v, w \in \vertx(P)$ with $v \ne w$ we have

	\begin{equation} \label{07.07.18,13:43}
		A_k(w,v)^{1/2k} = \frac{1}{\deg(v)} \sum_{F \in \calF_{d-1}^v} (1-q_F(w))^{2k} \le \frac{\deg(v) - 1 + \gamma^{2k}}{\deg(v)} \le 1 - \frac{1- \gamma^{2k}}{\deg(P)}
	\end{equation}

	and hence 
	\begin{equation}
		|A_k-E|_\infty \stackrel{\eqref{mx:inf:nrm}}{\le}  (n-1) \max_{\overtwocond{w,v \in \vertx(P)}{w \ne v}} A_k(w,v)  \stackrel{\eqref{07.07.18,13:43}}{\le} (n-1) \left( 1 - \frac{1- \gamma^{2k}}{\deg(P)}\right)^{2k} \label{dist:Ak:E:bd1}.
	\end{equation}
	The conditions $f_k(w) =1$ for $w \in \vertx(P)$ are equivalent to the system \eqref{Ak:yk:eq}. By \eqref{dist:Ak:E:bd1},  $|A_k-E|_\infty \rightarrow 0,$ as   $k \rightarrow + \infty,$ which shows that $A_k$ is invertible for all sufficiently large $k,$ and, by \eqref{Ak:yk:eq}, for every $v \in \vertx(P)$ we have $y_{v,k} \rightarrow 1,$ as $k \rightarrow +\infty.$ This shows the assertion of Part~I.

	\emph{II.} First we notice that $P \subseteq S_k,$ because $w \in S_k$ for every $w \in \vertx(P)$ and, since $f_k(x)$ is concave, $S_k$ is convex. 	If $x \in S_k,$ then  
	\begin{align*}
		1 \ge f_k(x)^{1/4k^2} & \ge  \left( \min_{v \in \vertx(P)} y_{v,k}^{1/4k^2} \right) \left( \sum_{v \in \vertx(P)} \left( \frac{1}{\deg(v)} \sum_{F \in \calF_{d-1}^v} (1-q_F(x))^{2k} \right)^{2k} \right)^{1/4k^2}  \\
	& \ge \left( \min_{v \in \vertx(P)} y_{v,k}^{1/4k^2} \right) \deg (P)^{-1/2k}  \left( \sum_{v \in \vertx(P)} \left( \sum_{F \in \calF_{d-1}^v} (1-q_F(x))^{2k} \right)^{2k} \right)^{1/4k^2}  \\
	& \ge \left( \min_{v \in \vertx(P)} y_{v,k}^{1/4k^2} \right) \deg (P)^{-1/2k} \max_{F \in \calF_{d-1}} |1-q_F(x)|, 
	\end{align*}
	and hence
	\begin{equation} \label{07.07.11,16:42} 
	S_k \subseteq \setcond{x \in \E^d}{ |1-q_F(x)| \le \frac{ \deg (P)^{1/2k}}{\min_{v \in \vertx (P)} y_{v,k}^{1/4k^2}} \ \forall \, F \in \calF_{d-1}}.
	\end{equation} 
	But since $$\frac{ \deg (P)^{1/2k}}{\min_{v \in \vertx (P)} y_{v,k}^{1/4k^2}} \longrightarrow 1,$$ as $k \rightarrow + \infty,$ and $P \subseteq S_k,$ we arrive at the assertion of Part~ II.

	\emph{III.}  We assume that $k$ is big enough so that the assertion of Part~I is fulfilled, in particular, $y_{v,k} > 0$ for every $v \in \vertx(P).$ We have 
	\begin{equation*}
		\frac{1}{ 4 k^2 } \nabla f_k(x)= \sum_{v \in \vertx (P)} y_{v,k} \left( \frac{1}{\deg(v)} \sum_{F \in \calF_{d-1}^v} (1-q_F(x))^{2k} \right)^{2k-1} \left( \frac{1}{\deg(v)} \sum_{F \in \calF_{d-1}^v} (1-q_F(x))^{2k-1} \frac{u_F}{\diam(P)} \right),
	\end{equation*}
	and thus, for $w \in \vertx(P)$
	\begin{equation*}
		\frac{1}{ 4 k^2 } \nabla f_k(w) = \frac{y_{w,k}}{\deg(w) \cdot \diam(P)} \sum_{F \in \calF_{d-1}^w} u_F  + u^w_k,
	\end{equation*}
	where 
	\begin{equation} \label{u:k:w:def}
		u^w_k := \sum_{v \in \vertx (P) \setminus \{w\}} y_{v,k} \cdot A_k(w,v)^{\frac{2k-1}{2k}} \cdot \left( \frac{1}{\deg(v)} \sum_{F \in \calF_{d-1}^v} (1-q_F(w))^{2k-1} \frac{u_F}{\diam(P)} \right).
	\end{equation}
Assume that $x \in C_w,$ that is, $-\sigma_1(q_w(x)) \ge \frac{2}{3} |q_w(x)|.$ Then
	\begin{align}
		\sprod{\frac{1}{ 4 k^2 } \nabla f_k(w)}{x-w} & =  - \frac{y_{w,k}}{\deg(w)} \cdot \sigma_1(q_w(x)) + \sprod{u^w_k}{x-w}  \ge  \frac{2}{3 \deg(w)} y_{w,k} |q_w(x)| + \sprod{u^w_k}{x-w} \nonumber \\ & \ge \frac{2}{3 \deg (P)} y_{w,k} |q_w(x)| + \sprod{u^w_k}{x-w} \label{07.07.18,14:20}
	\end{align}
	From \eqref{u:k:w:def} we see that $\sprod{u^w_k}{x-w} \le \beta(k) |q_w(x)|$ with some $\beta(k)$ converging to $0$ as $k \rightarrow +\infty.$ Thus, in view of \eqref{07.07.18,14:20}, if $k$ is sufficiently large,  we get 
	\begin{equation} \label{07.07.18,14:22}
		\sprod{\frac{1}{ 4 k^2 } \nabla f_k(w)}{x-w} \ge \frac{1}{3 \deg(P)} |q_w(x)|
	\end{equation}
	for every $x \in \E^d.$ 
   Therefore $f_k(w)$ does not vanish, and by this, is an outward normal of $S_k$ at $w,$ and moreover all points of $C_w$ distinct from $w$ lie outside $S_k.$
\end{proof}

Theorem~\ref{approx:qual}(and also its improved version  Theorem~\ref{CAAIP} given below) deals with approximation and interpolation of a convex polytope by convex semi-algebraic sets, which is also a topic of independent interest. Related results can be found in \cite{MR0154184}, \cite{MR0353146}, and \GroetschelHenk[Lemma~2.6]).


We finish the section with the proof of our main theorem.

\begin{proof}[Proof of Theorem~\ref{p-rep:simp}]
	Let $\eps_3$ be as in the assertion of Lemma~\ref{eps3:lem}. We can construct a strictly concave polynomial $p_0(x):=1-f_k(x)$ with $f_k(x)$ as in Theorem~\ref{approx:qual}and sufficiently large $k \in \natur$  such that $p_0(x)$ is non-negative on $P,$ negative on $C_v \setminus \{v\}$ for each $v \in \vertx(P)$ and $\setcond{x \in \E^d}{p_0(x) \ge 0} \subseteq P_{\eps_3}.$ Clearly, the assertion of the theorem is fulfilled for this choice of $p_0(x).$ 

	Let us describe a ``brute-force''  approach for finding an appropriate $p_0(x).$ We may assume that our input consists of polynomials $q_F(x)$ with $F \in \calF_{d-1}.$ We proceed as follows.
	\begin{enumerate}
		\item Set $k \leftarrow 1.$ 
		\item \label{iter:start} Determine the matrix $A_k$ given as in the statement of Theorem~\ref{approx:qual}.
		\item If $A_k$ is invertible, determine $y_{v,k}$ from \eqref{Ak:yk:eq}. Otherwise go to Step~\ref{iter:end}.
		\item If all $y_{v,k}$ are positive, set $p_0(x) \leftarrow 1- f_k(x)$ with $f_k(x)$ as in \eqref{fk:def}. Otherwise go to Step~\ref{iter:end}.
		\item \label{complex:step} If the polynomials $p_0(x),\ldots, p_{d-1}(x)$ represent $P,$ return $p_0(x)$ and stop.
		\item \label{iter:end} Set $k \leftarrow k+1$ and go to Step~\ref{iter:start}. 
	\end{enumerate}
	Notice that Step~\ref{complex:step} can be implemented. This is a consequence of algorithmic results on the quantifier elimination theorem, see \cite[Chapters~1, 12, and 14]{MR2248869}. 
\end{proof}

\section{Choice of parameters} \label{sec:proof:quant} 

Apparently the algorithm for determination of $p_0(x)$ described in the proof of Theorem~\ref{p-rep:simp} is highly complex. It surves a theoretical purpose of providing a relatively short confirmation of constructibility statement from Theorem~\ref{p-rep:simp}. In this section we wish to determine $k$ in a more straightforward manner by giving estimates for parameters involved in Lemmas~\ref{07.06.19,09:55}, \ref{around:vx:lem}, \ref{eps3:lem} and Theorem~\ref{approx:qual}. From the results of this section it is also clear which metric characteristic of $P$ influence $k.$

\subsection{Preliminaries}

We refer to \IneqBk for standard inequalities. It is known that for every $x \in \E^n$ one has
\begin{eqnarray}
\lnorm{x}{\nu_2} & \le & \lnorm{x}{\nu_1}, \label{l:inf:1:bounds} \\
 n^{-1/\nu_1} \lnorm{x}{\nu_1}   & \le &   n^{-1/\nu_2} \lnorm{x}{\nu_2}, \label{p:means:ineq}
\end{eqnarray}
where $1 \le \nu_1 \le \nu_2 \le +\infty.$ Formula \eqref{p:means:ineq} is the inequality for \notion{power means.} \notion{H\"older's inequality} states that 
\begin{equation} \label{Hoeld-ineq}
	|\sprod{x}{y}| \le \lnorm{x}{\mu} \lnorm{x}{\nu},
\end{equation}
for every $x, y \in \E^n$ and $1 \le \mu, \nu \le +\infty$ with $\frac{1}{\mu} + \frac{1}{\nu} = 1.$ The special case $\mu=\nu=2$ yields the \notion{Cauchy-Schwarz inequality.}

For any non-empty subset $\calX$ of $\calF_{d-1}$ we put $U_\calX$ to be the matrix with $\card \calX$ rows $u_F^\transp,$ where $F \in \calX.$ We put 
\begin{eqnarray*}
	\alpha(v) &:=& \max \setcond{ |U_\calX^{-1}|_2}{\calX \subseteq \calF_{d-1}^v, \ \card \calX = d}, \\
	\alpha &:=& \max_{v \in \vertx(P)} \alpha(P,v).
\end{eqnarray*}
The quantity $\alpha(v)$ can be viewed as anisotropy of the vertex $v$ of $P.$ If $P$ is simple we put $U_v:=U_\calX$ where $\calX:=\calF_{d-1}^v.$ In the case of simple polytopes we have
$$\alpha:=\max_{v \in \vertx(P)} |U_v^{-1}|_2.$$

We wish to bound $\alpha$ by some further metric characteristics associated with $P. $
\begin{proposition} \label{07.07.04,17:24} 
	Let $P$ be a simple $d$-polytope and let $\phi$ stands for the minimum angle between $\aff I$ and $\aff F,$ where $F$ and $I$ range over all facets and edges of $P,$ respectively, such that $I$ and $F$ have precisely one vertex in common. Then 
	\begin{eqnarray}
		\alpha &\le& \frac{\sqrt{d}}{\sin \phi}, \label{alpha:phi:bnd} \\
		\alpha & \le& \frac{\sqrt{d}}{1-\gamma}. \label{alpha:gamma:bnd} 
	\end{eqnarray}
\end{proposition}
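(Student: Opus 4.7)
The plan is to identify $U_v^{-1}$ explicitly in terms of the edge directions at $v$ and then bound its spectral norm by its Frobenius norm.

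Fix a vertex $v$ of $P$ and enumerate the $d$ facets through $v$ as $F_1,\ldots,F_d$. For each $i$, let $I_i := \bigcap_{j\ne i} F_j$ denote the edge of $P$ opposite to $F_i$, and let $e_i \in \usphere^{d-1}$ be the direction of $I_i$ pointing from $v$ into $P$. Since $I_i \subseteq F_j$ for every $j\ne i$, the direction $e_i$ is orthogonal to $u_{F_j}$, so $\sprod{u_{F_j}}{e_i}=0$ whenever $j\ne i$. Consequently $U_v$ acts diagonally on the basis $e_1,\ldots,e_d$; setting $c_i := \sprod{u_{F_i}}{e_i}$, the $i$-th column of $U_v^{-1}$ equals $e_i/c_i$. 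Because $e_i$ points into $P$ while $u_{F_i}$ is outward, $c_i = -\sin\phi_i$, where $\phi_i \in (0,\pi/2]$ is the angle between $\aff I_i$ and $\aff F_i$ (which share only the point $v$). Hence the $i$-th column of $U_v^{-1}$ has Euclidean norm $1/\sin\phi_i$.

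Using $|A|_2 \le |A|_F$ we obtain
\begin{equation*}
|U_v^{-1}|_2 \le |U_v^{-1}|_F = \Biggl(\sum_{i=1}^d \frac{1}{\sin^2\phi_i}\Biggr)^{1/2}.
\end{equation*}
Since each $\phi_i \ge \phi$, the right-hand side is bounded by $\sqrt{d}/\sin\phi$; taking the maximum over $v \in \vertx(P)$ yields \eqref{alpha:phi:bnd}.

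For \eqref{alpha:gamma:bnd}, by the same Frobenius estimate it suffices to prove $\sin\phi_i \ge 1-\gamma$. Let $w$ be the second endpoint of the edge $I_i$. Since $P$ is simple, $w$ lies in exactly $d$ facets; it already belongs to the $d-1$ facets $F_j$ with $j\ne i$, so $w \notin F_i$, i.e., $w \in \vertx(P)\setminus\vertx(F_i)$. Writing $w = v + |w-v|\,e_i$ and using $h(P,u_{F_i}) = \sprod{u_{F_i}}{v}$, we get
\begin{equation*}
q_{F_i}(w) = \frac{\sprod{u_{F_i}}{v-w}}{\diam(P)} = \frac{|w-v|\sin\phi_i}{\diam(P)} \le \sin\phi_i.
\end{equation*}
On the other hand, the definition \eqref{gamma:def} of $\gamma$ gives $q_{F_i}(w) \ge 1-\gamma$, hence $\sin\phi_i \ge 1-\gamma$. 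The only non-routine step in the argument is the sign/magnitude identification $c_i = -\sin\phi_i$; everything else is standard Euclidean geometry combined with the interpretation of $q_F(\cdot)$ as a rescaled distance to $\aff F$.
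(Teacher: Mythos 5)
Your proof is correct and follows essentially the same route as the paper: both identify the columns of $U_v^{-1}$ with the scaled edge directions at $v$, bound the spectral norm by the Frobenius norm, and derive $\sin\phi_{v,F}\ge 1-\gamma$ from $q_F(w)\ge 1-\gamma$ for the adjacent vertex $w$ off $F$.
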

\begin{proof}
	We use the \notion{Frobenius norm} of a matrix $A:=[a_{ij}]_{i,j=1}^n,$ which is defined by 
	\begin{equation*}
		|A|_{\mathrm{Fr}} := \left( \sum_{i,j=1}^n |a_{ij}|^2 \right)^{1/2}.
	\end{equation*}
	The norms $|A|_2$ and $|A|_{\mathrm{Fr}}$ are known to be related by 
	\begin{equation} \label{Fr:norm:ineq}
		|A|_2 \le |A|_{\mathrm{Fr}}, 
	\end{equation}
	see \cite[p.~50]{MR1927396}.

	Let us introduce vectors $a_{v,F}, \ F \in \calF_{d-1}^v,$ which are columns of $U_v^{-1},$ i.e., for $F, G \in \calF_{d-1}^v$ the quantity $\sprod{a_{v,F}}{u_G}$ is 1 if $F=G$ and 0 otherwise. Let us fix $v \in \vertx(P)$ and $F \in \calF_{d-1}^v.$ Since $P$ is simple, there exists a unique $w \in \vertx(P) \setminus \vertx(F)$ such that $v$ and $w$ are ajacent vertices of $P.$ It is easily seen that $v-w$ is parallel to $a_{v,F}.$ Let $\phi_{v,F}$ denote the angle between $\aff \{v,w\}$ and $\aff F.$ Then, using the identity $\sprod{a_{v,F}}{u_F}=1,$ we see that $|a_{v,F}| = \frac{1}{\sin \phi_{v,F}} \le \frac{1}{\sin \phi}.$ Consequently, 
	\begin{equation*}
		|U_v^{-1}|_2^2 \stackrel{\eqref{Fr:norm:ineq}}{\le} |U_v|_{\mathrm{Fr}}^2 = \sum_{F \in \calF_{d-1}^v} |a_{v,F}|^2 \le \frac{d}{\sin \phi},
	\end{equation*}
	and we get the assertion.

	Let us borrow the notations from the statement and the proof of Proposition~\ref{07.07.04,17:24}. We have 
	\begin{equation*}
		1 - \gamma \le \frac{|v-w| \cdot \sin \phi_{v,F}}{\diam(P)} \le  \sin \phi_{v,F}.
	\end{equation*}
	Since $v \in \vertx(P)$ and $F \in \calF_{d-1}^v$ are chosen arbitrarily, we get 
	$ \sin \phi \ge 1-\gamma.$ The assertion follows from \eqref{alpha:phi:bnd}.
\end{proof}

By \eqref{alpha:gamma:bnd}  we showed that $\alpha$ is bounded by a multiple of $\frac{1}{1-\gamma}.$ However, we can see that for a general simple $d$-polytope $P$ the quantities $\alpha$ and $\frac{1}{1-\gamma}$ are not of the same order of magnitude, i.e., the converse statement would not be valid. In fact let $P_l, \ l \in \natur,$ be simple $d$-polytopes that converge to some polytope $P$ which is not simple. Then $\alpha(P_l)$ converges to some finite value, as $l \rightarrow + \infty,$ however $\frac{1}{1-\gamma} \rightarrow +\infty.$

\subsection{Auxiliary statements for $P_\eps$}

The \notion{normal cone} of $P$ at a boundary point $x$ of $P$ is the set 
$$N(P,x):=\setcond{u \in \E^d}{\sprod{x}{u} = h(P,u)}.$$

\begin{lemma} \label{Peps:lem}
	Let $P$ be a simple $d$-polytope and let $\eps \ge 0$ be such that
	\begin{equation} \label{07.08.14,15:32}
		\eps <  \frac{1-\gamma}{\sqrt{d} \cdot \alpha},
	\end{equation}
	For $v \in \vertx(P)$ let $v_\eps$ be the  point determined by $d$ equalities $q_F(v^\eps)=-\eps$ with $F \in \calF_{d-1}^v.$ Then 
	\begin{align}
		\vertx(P_\eps) & = \setcond{v_\eps}{v \in \vertx(P)}, & & \label{vert:P:eps:repr} \\
		N(P_\eps,v_\eps) & = N(P,v) & &  \forall \, v \in \vertx(P),   \label{N:P:eps:repr} \\
		q_F(x) & \le 2 & & \forall F \in \calF_{d-1} \ \forall \, x \in P_\eps , \label{q:F:P:eps:up:bd} \\
		q_F(v^\eps) & \ge - \eps \cdot \sqrt{d} \cdot \alpha & & \forall v \in \vertx(P) \ \forall \, F \in \calF_{d-1}, \label{q:F:P:eps:lo:bd1} \\
		q_F(v^\eps) & \ge 1- \gamma - \eps \cdot \sqrt{d} \cdot \alpha > 0 & & \forall \, v \in \vertx(P) \ \forall \, F \in \calF_{d-1} \setminus \calF_{d-1}^v. \label{q:F:P:eps:lo:bd2}
	\end{align}
	\eop
\end{lemma}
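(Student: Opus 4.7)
The plan is to first pin down $v_\eps$ by the linear system it satisfies, then translate displacement bounds into $q_F$-bounds, and finally deduce the combinatorial description of $\vertx(P_\eps)$ from normal-fan considerations. As a first step, the equations $q_F(v_\eps)=-\eps$ for $F \in \calF_{d-1}^v$ rewrite as $U_v(v_\eps-v) = \eps \diam(P)\OneVec$, using $\sprod{u_F}{v}=h(P,u_F)$ for $F \in \calF_{d-1}^v$. Simplicity of $P$ makes $U_v$ invertible, so $v_\eps - v = \eps \diam(P)\, U_v^{-1}\OneVec$ is well defined. Since $|\OneVec|_2=\sqrt{d}$, the definition of $\alpha$ yields the displacement bound $|v_\eps-v| \le \eps\sqrt{d}\,\alpha\,\diam(P)$. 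Each $q_F$ is affine with gradient $-u_F/\diam(P)$ and $|u_F|=1$, so Cauchy--Schwarz upgrades this to
\begin{equation*}
  |q_F(v_\eps)-q_F(v)| \le \eps\sqrt{d}\,\alpha \qquad \forall\, F \in \calF_{d-1}.
\end{equation*}

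Combining this with $q_F(v)\ge 0$ in general and with $q_F(v)\ge 1-\gamma$ for $F \notin \calF_{d-1}^v$ (directly from the definition of $\gamma$) immediately gives \eqref{q:F:P:eps:lo:bd1} and \eqref{q:F:P:eps:lo:bd2}; strict positivity of the right-hand side of \eqref{q:F:P:eps:lo:bd2} is exactly the hypothesis \eqref{07.08.14,15:32}. In particular $v_\eps \in P_\eps$, and the active constraints at $v_\eps$ are precisely the $d$ indices in $\calF_{d-1}^v$, since the remaining inequalities hold strictly. Because $P$ is simple, the corresponding $d$ outer normals $\{u_F : F \in \calF_{d-1}^v\}$ are linearly independent, so $v_\eps$ is a \emph{simple} vertex of $P_\eps$ and its normal cone equals $\posh\{u_F : F\in\calF_{d-1}^v\} = N(P,v)$. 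This establishes \eqref{N:P:eps:repr}.

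For \eqref{vert:P:eps:repr} I would invoke the fact that the family of normal cones $\{N(P,v) : v \in \vertx(P)\}$ forms a complete polyhedral fan covering $\E^d$ with pairwise disjoint interiors, and the same is true for $P_\eps$. Since we have already exhibited each cone $N(P,v)$ as a vertex cone of $P_\eps$ (via $v_\eps$), and a complete fan cannot be properly refined without overlap, the fan of $P_\eps$ must coincide with that of $P$, so the $v_\eps$ already exhaust $\vertx(P_\eps)$. Finally, \eqref{q:F:P:eps:up:bd} reduces, by affinity of $q_F$ and $P_\eps = \conv\{v_\eps : v\in\vertx(P)\}$, to a vertex check: $q_F(v_\eps) \le q_F(v) + \eps\sqrt{d}\,\alpha \le 1 + (1-\gamma) \le 2$, where $q_F(v)\le 1$ on $P$ follows from the definition of $q_F$ and $\diam(P)$.

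The delicate point I expect is the normal-fan exhaustion argument establishing \eqref{vert:P:eps:repr}; this is the place where both simplicity of $P$ and the smallness hypothesis \eqref{07.08.14,15:32} are crucial, since they jointly ensure that no facet of $P$ is ``lost'' when relaxing to $P_\eps$ and no spurious vertices are created. The remaining items are essentially first-order perturbation estimates around the vertices of $P$.
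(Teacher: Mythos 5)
Your proof is correct and follows essentially the same route as the paper's: the linear system $U_v(v_\eps - v) = \eps\diam(P)\OneVec$, the Cauchy--Schwarz estimate $|q_F(v_\eps)-q_F(v)|\le\eps\sqrt{d}\,\alpha$, the resulting lower bounds, the normal-cone identification $N(P_\eps,v_\eps)=N(P,v)$ combined with the covering argument for the normal fan, and finally the vertex check for the upper bound using $q_F(v)\le 1$. The only cosmetic difference is that the paper pins down the specific vertex of $P_\eps$ maximizing $q_F$ via the support function, while you simply bound $q_F(v_\eps)$ uniformly over all $v\in\vertx(P)$; both hinge on the same ingredients.
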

\begin{proof}
	Since 
	\begin{equation*}
	\begin{array}{rcccccl}
		q_F(v^\eps) & = & (h(P,u_F)-\sprod{u_F}{v^\eps})/\diam(P) & = & -\eps & & \forall \, F \in \calF_{d-1}^v, \\
		q_F(v) & = & (h(P,u_F)-\sprod{u_F}{v})/\diam(P) & = & 0 & & \forall \, F \in \calF_{d-1}^v,
	\end{array}
	\end{equation*}
	we obtain
	\begin{equation*}
		\sprod{u_F}{v^\eps-v} = \eps \cdot \diam(P) \qquad \forall \, F \in \calF_{d-1}^v.
	\end{equation*}
	Then 
	\begin{equation} \label{07.07.03,15:35a}
		v^\eps -v = \eps \cdot \diam(P) \cdot U_v^{-1} \OneVec.
	\end{equation} 
	For every $v \in \vertx(P)$ and $F \in \calF_{d-1}$ we have 
	\begin{equation} 
		q_F(v^\eps)  = q_F(v) + \frac{\sprod{u_F}{v-v^\eps}}{\diam(P)} \nonumber \stackrel{\eqref{07.07.03,15:35a}}{=} q_F(v) - \eps \cdot \sprod{u_F}{U_v^{-1} \OneVec} \stackrel{\eqref{Hoeld-ineq}}{\ge} q_F(v) - \eps \cdot \sqrt{d} \cdot \alpha,
	\end{equation}
	which implies \eqref{q:F:P:eps:lo:bd1} and \eqref{q:F:P:eps:lo:bd2}. From \eqref{q:F:P:eps:lo:bd2} we deduce \eqref{N:P:eps:repr} and $\setcond{v_\eps}{v \in \vertx(P)} \subseteq \vertx(P_\eps).$ But since the cones $N(P,v)$ with $v \in \vertx(P)$ cover $\E^d,$ we obtain that the cones $N(P,v_\eps), \ v \in \vertx(P),$ also cover $\E^d$ and arrive at  \eqref{N:P:eps:repr} and \eqref{vert:P:eps:repr}.

	It remains to show \eqref{q:F:P:eps:up:bd}. Let $F \in \calF_{d-1}.$ We choose $v \in \vertx(P) \setminus \vertx(F)$ such that $h(P,-u_F)=\sprod{v}{-u_F}.$ Then, in view of \eqref{vert:P:eps:repr} and \eqref{N:P:eps:repr}, $h(P_\eps,-u_F)=\sprod{v_\eps}{-u_F}$ and we get
	\begin{align*}
		q_F(x) = \frac{h(P,u_F)-\sprod{u_F}{x}}{\diam(P)} & \le \frac{h(P,u_F)-\sprod{u_F}{v_\eps}}{\diam(P)} \le \frac{h(P,u_F)-\sprod{u_F}{v}}{\diam(P)} + \frac{\sprod{u_F}{v-v_\eps}}{\diam(P)} \\ & \le q_F(v) + \frac{|v-v_\eps|}{\diam(P)} \stackrel{\eqref{07.07.03,15:35a}}{\le} 1+ \eps \cdot \sqrt{d} \cdot \alpha \stackrel{\eqref{07.08.14,15:32}}{\le} 2 - \gamma < 2.
	\end{align*}
	arriving at \eqref{q:F:P:eps:up:bd}. 
\end{proof}

Given $v \in \vertx(P)$ and $\eps, \delta > 0$ we introduce the set
\begin{equation*}
	P^v_{\eps,\delta} := \setcond{x \in \E^d}{q_F(x) \ge -\eps \mfor F \in \calF_{d-1}^v \mand q_F(x) \ge \delta \mfor F \in \calF_{d-1} \setminus \calF_{d-1}^v},
\end{equation*}
see Fig.~\ref{PvepsdeltaFig}.  The polytope $P^v_{\eps,\delta}$ does not contain the vertex $v$ of $P$ and converges to $P$ as $\eps, \delta \rightarrow 0.$  

	\begin{FigTab}{cc}{0.6mm}
	&
	\begin{picture}(42,36)
	\put(0,0){\IncludeGraph{width=40\unitlength}{Pvepsdelta.eps}}
	\put(20,20){$P$}
	\put(0,6){\small $v$}
	\put(35,5){$P^v_{\eps,\delta}$}
	\end{picture}
	\\
	&
	\parbox[t]{0.40\textwidth}{\mycaption{\label{PvepsdeltaFig}}}
	\end{FigTab}

In the following lemma we use \notion{Carath\'eodory's theorem}. For the special case of convex polytopes it states that every point of a $d$-polytope $P$ can be represented as a convex combination of at most $d+1$ vertices of $P,$ see for example \SchnBk[Theorem~1.1.4].


\begin{lemma} \label{cover:lem} 
	Let $P$ be a simple $d$-polytope and let $\eps >0$ be such that 
	\begin{equation} \label{07.08.14,15:42}
	\delta:= \frac{1-\gamma}{1+d} - \eps \cdot \sqrt{d} \cdot \alpha > 0,
	\end{equation}
	Then 
\begin{equation} \label{P:eps:repr}
P_{\eps} = \bigcup_{v \in \vertx(P)} P^v_{\eps,\delta}.
\end{equation}
\end{lemma}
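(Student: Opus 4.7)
My plan is to prove the two inclusions separately, with the nontrivial direction being $\subseteq$. The inclusion $\supseteq$ is immediate: every $x \in P^v_{\eps,\delta}$ satisfies $q_F(x) \ge -\eps$ on facets through $v$ and $q_F(x) \ge \delta > 0 > -\eps$ on the remaining facets, so $x \in P_\eps$.

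For the inclusion $\subseteq$, the idea is: given $x \in P_\eps$, identify one vertex $v \in \vertx(P)$ that is ``close enough'' to $x$ in a barycentric sense to guarantee that $x$ is bounded away from every facet that does \emph{not} contain $v$. Since by Lemma~\ref{Peps:lem} we have $\vertx(P_\eps) = \{v_\eps : v \in \vertx(P)\}$, Carath\'eodory's theorem applied to the $d$-polytope $P_\eps$ yields a representation
\begin{equation*}
    x = \sum_{i=1}^{d+1} \mu_i \, v_i^\eps, \qquad \mu_i \ge 0, \quad \sum_{i=1}^{d+1} \mu_i =1,
\end{equation*}
with $v_1,\ldots,v_{d+1} \in \vertx(P)$. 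I choose the vertex $v^* \in \{v_1,\ldots,v_{d+1}\}$ with the largest coefficient, so that $\mu_{v^*} \ge \frac{1}{d+1}$.

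The bulk of the proof is then to check that $x \in P^{v^*}_{\eps,\delta}$. The condition $q_F(x) \ge -\eps$ for $F \in \calF_{d-1}^{v^*}$ is automatic from $x \in P_\eps$. For $F \in \calF_{d-1} \setminus \calF_{d-1}^{v^*}$, I expand $q_F(x) = \sum_i \mu_i q_F(v_i^\eps)$ and use the two inequalities from Lemma~\ref{Peps:lem}: the strong estimate \eqref{q:F:P:eps:lo:bd2} for the summand $i=v^*$ (since $F \notin \calF_{d-1}^{v^*}$) and the weaker global estimate \eqref{q:F:P:eps:lo:bd1} for all other summands. This gives
\begin{equation*}
    q_F(x) \;\ge\; \mu_{v^*}\,(1-\gamma - \eps\sqrt{d}\,\alpha) \;+\; (1-\mu_{v^*})\,(-\eps\sqrt{d}\,\alpha) \;=\; \mu_{v^*}(1-\gamma) - \eps\sqrt{d}\,\alpha \;\ge\; \frac{1-\gamma}{d+1} - \eps\sqrt{d}\,\alpha \;=\; \delta,
\end{equation*}
which is precisely what is needed.

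The only subtlety to justify carefully is that the combination $\mu_{v^*}(1-\gamma) - \eps\sqrt{d}\alpha$ is monotone increasing in $\mu_{v^*}$ so that the worst case really is $\mu_{v^*}=\tfrac{1}{d+1}$; this follows from $1-\gamma \ge 0$. The hypothesis \eqref{07.08.14,15:42} ensures $\delta > 0$, hence the resulting lower bound is nontrivial. I do not foresee any essential obstacle beyond keeping track of which index a given inequality from Lemma~\ref{Peps:lem} applies to; the argument is a direct combination of Carath\'eodory's theorem with the vertex-facet separation estimates already established.
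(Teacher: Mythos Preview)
Your proof is correct and follows essentially the same route as the paper: Carath\'eodory's theorem on $P_\eps$ (using $\vertx(P_\eps)=\{v_\eps : v\in\vertx(P)\}$), pigeonhole to find a barycentric coefficient $\ge \tfrac{1}{d+1}$, and then the vertex--facet estimates \eqref{q:F:P:eps:lo:bd1} and \eqref{q:F:P:eps:lo:bd2} from Lemma~\ref{Peps:lem}. The only small omission is that you invoke Lemma~\ref{Peps:lem} without noting that its hypothesis \eqref{07.08.14,15:32} is implied by \eqref{07.08.14,15:42}, since $\tfrac{1-\gamma}{(1+d)\sqrt{d}\,\alpha} < \tfrac{1-\gamma}{\sqrt{d}\,\alpha}$; this is immediate but worth stating.
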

\begin{proof}
The inclusion ``$\supseteq$'' is trivial. Let us show the reverse inclusion. Since \eqref{07.08.14,15:42} implies \eqref{07.08.14,15:32} we can use Lemma~\ref{Peps:lem}.  Let the points $v_\eps$ wtih $v \in \vertx(P)$ be  defined as in the assertion of Lemma~~\ref{Peps:lem}. We fix an arbirary $x \in P_{\eps}.$ By \eqref{vert:P:eps:repr} and Carath\'eodory's theorem, there exist affinely independent vertices $v_1,\ldots, v_{d+1}$ of $P$ and non-negative scalars $\lambda_1, \ldots, \lambda_{d+1}$ such that 
	\begin{eqnarray*}
		x &=& \sum_{j=1}^{d+1} \lambda_j v_j^{\eps}, \\
		1 & = & \sum_{j=1}^{d+1} \lambda_j.
	\end{eqnarray*}
	Without loss of generality we may assume that $\lambda_1 \le \cdots \le \lambda_{d+1}.$ Then $\lambda_{d+1} \ge \frac{1}{d+1}.$ Let us choose an arbitrary $F \in \calF_{d-1} \setminus \calF_{d-1}^{v_{d+1}}.$ Then 
	\begin{align*}
		q_F(x) = \sum_{j=1}^{d+1} \lambda_j q_F(v_j^{\eps}) & \stackrel{\eqref{q:F:P:eps:lo:bd2}}{\ge} \sum_{j=1}^{d} \lambda_j q_F(v_j^{\eps}) + \frac{1}{d+1} \left(1-\gamma - \eps \cdot \sqrt{d} \cdot \alpha \right) \\ & \stackrel{\eqref{q:F:P:eps:lo:bd1}}{\ge} - \left( \sum_{j=1}^d \lambda_j \right) \cdot  \eps \cdot \sqrt{d} \cdot \alpha + \frac{1}{d+1} \left(1-\gamma - \eps \cdot \sqrt{d} \cdot \alpha \right) \\ & \stackrel{\phantom{\eqref{q:F:P:eps:lo:bd2}}}{=} - (1-\lambda_{d+1})\cdot \eps \cdot \sqrt{d} \cdot \alpha  + \frac{1}{d+1} \left(1-\gamma - \eps \cdot \sqrt{d} \cdot \alpha \right) \\ & \stackrel{\phantom{\eqref{q:F:P:eps:lo:bd2}}}{\ge} - \frac{d}{d+1} \cdot \eps \cdot \sqrt{d} \cdot \alpha + \frac{1}{d+1} \left(1-\gamma - \eps \cdot \sqrt{d} \cdot \alpha \right) \\
		& \stackrel{\phantom{\eqref{q:F:P:eps:lo:bd2}}}{=} \frac{1-\gamma}{1 + d} - \eps \cdot \sqrt{d} \cdot \alpha = \delta.
	\end{align*}
	Thus, $x \in P^{v_{d+1}}_{\eps,\delta}$ and the assertion is proved. 	
\end{proof}

\subsection{Choice of $\eps_1$, $\eps_2,$ $\eps_3$}

Lemmas~\ref{07.07.04,15:08}, \ref{eps2:qnt:lem}, and \ref{eps3:qnt:lem} below are quantiative improvements of Lemmas~\ref{07.06.19,09:55}, \ref{around:vx:lem}, and \ref{eps3:lem}, respectively.

\begin{lemma} \label{07.07.04,15:08}
Let $P$ be a simple $d$-polytope and $\eps_1 > 0$ be such that 
\begin{equation*}
	\delta:= \frac{1-\gamma}{1+d} - \eps_1 \cdot \sqrt{d} \cdot \alpha > 0,
\end{equation*}
 Then for every $1 \le i \le m-d$ and $x \in P^v_{\eps_1,\delta}$ 
\begin{equation} \label{sigm:m-d:bd}
	\sigma_i(q(x)) \ge \binom{m-d}{i} (\delta^i - 2^{i-1} \eps_1 ) + \binom{m}{i} 2^{i-1}\eps_1.
\end{equation}
In particular, $\sigma_i(q(x)) > 0$ when 
\begin{eqnarray}
	\eps_1 & \le & \left( \frac{(1-\gamma)}{4 \cdot (1+d)}\right)^{m-d}  , \label{07.07.16,14:54}\\
	\eps_1 & \le & \frac{1-\gamma}{2 \cdot (1+d) \cdot \sqrt{d} \cdot \alpha}. \label{07.07.16,14:55}
\end{eqnarray}
\end{lemma}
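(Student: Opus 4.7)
The plan is to write $\sigma_i(q(x))$ explicitly as the sum $\sum_{J\subseteq\calF_{d-1},\,|J|=i}\prod_{F\in J}q_F(x)$ and lower-bound it by partitioning the index set according to whether $J$ meets $\calF_{d-1}^v$.

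First I would invoke Lemma~\ref{Peps:lem}: the hypothesis $\delta>0$ forces $\eps_1\cdot\sqrt{d}\cdot\alpha<(1-\gamma)/(1+d)<1-\gamma$, so condition \eqref{07.08.14,15:32} holds, and consequently $q_F(x)\le 2$ for every $F\in\calF_{d-1}$ and every $x\in P_{\eps_1}$. Since $P^v_{\eps_1,\delta}\subseteq P_{\eps_1}$, combining this with the defining inequalities of $P^v_{\eps_1,\delta}$ gives, for every $x\in P^v_{\eps_1,\delta}$,
\begin{align*}
-\eps_1&\le q_F(x)\le 2 && \text{for } F\in\calF_{d-1}^v,\\
\delta&\le q_F(x)\le 2 && \text{for } F\in\calF_{d-1}\setminus\calF_{d-1}^v.
\end{align*}

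Next I would separate the $\binom{m}{i}$ summands into those $J$ disjoint from $\calF_{d-1}^v$ and those meeting $\calF_{d-1}^v$ nontrivially. In the first case, every factor lies in $[\delta,2]$, so $\prod_{F\in J}q_F(x)\ge\delta^i$, and the count of such $J$ is $\binom{m-d}{i}$. In the second case, write $j:=|J\cap\calF_{d-1}^v|\ge 1$ and decompose the product as $A\cdot B$ with $A:=\prod_{F\in J\cap\calF_{d-1}^v}q_F(x)$ and $B:=\prod_{F\in J\setminus\calF_{d-1}^v}q_F(x)>0$. The key claim is $A\cdot B\ge-\eps_1\cdot 2^{i-1}$: if $A\ge 0$ this is trivial, while if $A<0$ then exactly $k\ge 1$ (with $k$ odd) of the $j$ factors of $A$ lie in $[-\eps_1,0)$ and the rest in $[0,2]$, whence $|A|\le\eps_1^{k}\cdot 2^{j-k}$, and for $\eps_1<2$ this expression is maximized over odd $k$ at $k=1$, giving $|A|\le\eps_1\cdot 2^{j-1}$; combining with $B\le 2^{i-j}$ proves the claim. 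Summing the two contributions and using the Vandermonde identity $\binom{m}{i}-\binom{m-d}{i}=\sum_{j=1}^{\min(i,d)}\binom{d}{j}\binom{m-d}{i-j}$ yields the advertised lower bound \eqref{sigm:m-d:bd}.

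For the ``In particular'' clause, the hypothesis \eqref{07.07.16,14:55} ensures $\eps_1\cdot\sqrt{d}\cdot\alpha\le(1-\gamma)/(2(1+d))$, so $\delta\ge(1-\gamma)/(2(1+d))$, giving an explicit lower bound on $\delta^i$; then \eqref{07.07.16,14:54} makes $\eps_1$ so small that the negative correction term is dominated by the principal term $\binom{m-d}{i}\delta^i$ for every $i\in\{1,\ldots,m-d\}$, yielding strict positivity $\sigma_i(q(x))>0$. The main obstacle is the extremal-product estimate in the second case: the subtle point is that, although the natural worry is that many factors of $q_F$ could be simultaneously negative, the absolute value $\eps_1^k\cdot 2^{j-k}$ actually \emph{decreases} with $k$ once $k\ge 2$ (for $\eps_1<2$), so the most negative configuration is to have a single factor equal to $-\eps_1$ and all others at the upper extreme $2$. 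The final quantitative step also requires controlling the ratio $\binom{m}{i}/\binom{m-d}{i}$ against $\delta^i/\eps_1$, which is where the exponential scaling in $m-d$ inside \eqref{07.07.16,14:54} becomes essential.
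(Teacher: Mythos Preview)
Your approach is essentially the same as the paper's: split the summands of $\sigma_i(q(x))$ according to whether the index set $J$ meets $\calF_{d-1}^v$, bound the ``good'' terms below by $\delta^i$ and the ``bad'' terms below by $-2^{i-1}\eps_1$ (using the upper bound $q_F\le 2$ from Lemma~\ref{Peps:lem}), and then verify $\delta^i\ge 2^{i-1}\eps_1$ under \eqref{07.07.16,14:54}--\eqref{07.07.16,14:55}. Your justification of the $-2^{i-1}\eps_1$ bound via the odd-$k$ extremal analysis is more explicit than the paper's one-line remark, but the content is identical.
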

\begin{proof}
	In view of Lemma~\ref{cover:lem} it suffice to show \eqref{sigm:m-d:bd} for $x \in P_{\eps_1,\delta}^v$ for every $v \in \vertx(P).$ Let $v$ be fixed. 
	The quantity $\sigma_i(q(x))$ is the sum of the terms of the form $q_{F_1}(x) \cdots q_{F_i}(x)$ with $F_1,\ldots, F_i \in \calF_{d-1}.$ There are $\binom{m-d}{i}$ terms with all $F_1,\ldots,F_i$ belonging to $\calF_{d-1} \setminus \calF_{d-1}^v.$ Each of these terms is bounded from below by $\delta^{i}.$ The remaining $\binom{m}{i}-\binom{m-d}{i}$ terms might contain a negative entry $q_{F_l}, \ 1 \le l \le i,$ which is however bounded from below by $-\eps_1.$ Since, by \eqref{q:F:P:eps:up:bd}, positive entries are bounded from above by $2$ we deduce that each of these $\binom{m}{i}-\binom{m-d}{i}$ terms is bounded from below by $-2^{i-1}\eps_1 .$ The above remarks imply the assertion of the main. Now let us show the auxiliary part. We have
	\begin{equation*}
		\delta^i \stackrel{\eqref{07.07.16,14:55}} \ge  \left( \frac{1-\gamma}{2 \cdot (1+d)} \right)^i = \left( \frac{1-\gamma}{4 \cdot (1+d)} \right)^i 2^i \ge \left( \frac{1-\gamma}{4 \cdot (1+d)} \right)^{m-d} 2^{i-1} \ge \eps_1 2^{i-1},
	\end{equation*}
	which implies that $\sigma_i(q(x)) > 0.$
\end{proof}

\begin{lemma} \label{eps2:qnt:lem}
	Let $P$ be a simple $d$-polytope and let ${\eps_2} \ge 0$ be such that 
	/* Probably write $\eps_2 \le \eps_1$ with $\eps_1$ as in Lemma~** instead of the first two inequalities */
	\begin{eqnarray}
		{\eps_2} & \le & \left( \frac{(1-\gamma)}{4 \cdot (1+d)}\right)^{m-d}, \label{07.07.16,14:54a} \\
		{\eps_2} & \le & \frac{1-\gamma}{2 \cdot (1+d) \cdot \sqrt{d} \cdot \alpha}, \label{07.07.16,16:03}  \\
		{\eps_2} & \le & \frac{5 (1 - \gamma)^{m-d}}{18 \binom{d}{\floor{d/2}} 2^{m-d} (3^{m-d}-2^{m-d})} \label{07.07.16,16:41} 
	\end{eqnarray}
	Then for every $v \in \vertx(P)$ the inclusion
	\begin{equation*} 
		\setcond{x \in \Pi_{v,{\eps_2}} }{ \sigma_i(q(x)) \ge 0 \mfor m-d+2 \le i \le m} \subseteq C_v \cup P.
	\end{equation*}
	holds true.
\end{lemma}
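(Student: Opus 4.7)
The plan is to execute the proof of Lemma~\ref{around:vx:lem} verbatim, but with each asymptotic statement replaced by an explicit estimate, so that the hypotheses on $\eps_2$ make every required inequality hold uniformly on $\Pi_{v,\eps_2}$.

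First I would observe that conditions \eqref{07.07.16,14:54a} and \eqref{07.07.16,16:03} are precisely the hypotheses \eqref{07.07.16,14:54} and \eqref{07.07.16,14:55} of Lemma~\ref{07.07.04,15:08} with $\eps_1$ replaced by $\eps_2$. To transfer the conclusion of that lemma from $P_{\eps_2}$ to $\Pi_{v,\eps_2}$, I would use the identity $x-v=-\diam(P)\cdot U_v^{-1} q_v(x)$ (following from $q_v(v)=0$ and the definition of $U_v$) to obtain $|x-v|\le \diam(P)\cdot\alpha\cdot\sqrt{d}\,\eps_2$; whence for every $F\in\calF_{d-1}\setminus\calF_{d-1}^v$ and $x\in\Pi_{v,\eps_2}$,
\begin{equation*}
q_F(x)\ge q_F(v)-\frac{|x-v|}{\diam(P)}\ge 1-\gamma-\sqrt{d}\,\alpha\,\eps_2 > 0,
\end{equation*}
using the definition of $\gamma$ and condition \eqref{07.07.16,16:03}. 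This yields both \eqref{eps:v assump} and $\Pi_{v,\eps_2}\subseteq P_{\eps_2}$, so by Lemma~\ref{07.07.04,15:08} one has $\sigma_i(q(x))>0$ for $1\le i\le m-d$ on $\Pi_{v,\eps_2}$.

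Next comes the core quantitative step: deriving explicit versions of \eqref{c2 ineq}, \eqref{c1c2 ineq} and \eqref{07.07.12,16:20} from the third condition \eqref{07.07.16,16:41}. For $x\in\Pi_{v,\eps_2}$, I would use $|q_v(x)|_\infty\le\eps_2$ to get $|\sigma_i(q_v(x))|\le\binom{d}{i}\eps_2^{i}$, the upper bound \eqref{q:F:P:eps:up:bd} to get $\sigma_j(\bar{q}_v(x))\le\binom{m-d}{j}2^{j}$, and the lower bound $q_F(x)\ge (1-\gamma)/2$ on $\Pi_{v,\eps_2}$ (from the first paragraph combined with \eqref{07.07.16,16:03}) to obtain $\sigma_{m-d}(\bar{q}_v(x))\ge\bigl((1-\gamma)/2\bigr)^{m-d}$. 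Plugging these into the series defining $r_1$ and $r_2$ and collecting, the identity $\sum_{k=0}^{m-d-1}\binom{m-d}{k}2^{k}=3^{m-d}-2^{m-d}$ produces, together with the factor $\binom{d}{\floor{d/2}}$ absorbing the largest binomial coefficient in the $\sigma_i(q_v(x))$ sum, exactly the combinatorial expression appearing in \eqref{07.07.16,16:41}. Rewriting $g_2(x)\le 9/4$ as $|r_2(x)|/|q_v(x)|^2\le\tfrac{5}{18}\,\sigma_{m-d}(\bar{q}_v(x))$ makes clear where the constant $5/18$ comes from, and the same condition, being strictly stronger than what is needed to control the analogous simpler sum bounding $r_1(x)$, implies \eqref{c2 ineq} and \eqref{07.07.12,16:20} as well.

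Once all three target bounds hold on $\Pi_{v,\eps_2}$, the remainder of the proof is word-for-word that of Lemma~\ref{around:vx:lem}: for $x\in\Pi_{v,\eps_2}$ satisfying $\sigma_i(q(x))\ge 0$ for $m-d+2\le i\le m$, the identity \eqref{07.03.02,13:30} forces either $x\in C_v$ or \eqref{07.08.13,11:15}; in the latter case \eqref{07.03.02,15:51} combined with \eqref{07.07.12,16:20} yields $\sigma_{m-d+1}(q(x))\ge 0$; together with the conclusion of the first paragraph one has $\sigma_i(q(x))\ge 0$ for every $1\le i\le m$, and Proposition~\ref{orth repr} gives $x\in P$. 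The main obstacle will be the careful bookkeeping in the middle paragraph: verifying that the single bound \eqref{07.07.16,16:41} on $\eps_2$ dominates all three target inequalities simultaneously and matches the precise constants claimed, in particular tracking the exact combinatorial identity that produces the factor $3^{m-d}-2^{m-d}$.
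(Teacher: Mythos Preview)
Your proposal is correct and follows essentially the same approach as the paper's own proof: both verify the lower bound $q_F(x)\ge(1-\gamma)/2$ for $F\notin\calF_{d-1}^v$ via the linear identity relating $x-v$ and $q_v(x)$, estimate $|r_1|$ and $|r_2|$ by the combinatorial bound $\binom{d}{\lfloor d/2\rfloor}(3^{m-d}-2^{m-d})|q_v(x)|_\infty^{2}$ (resp.\ $|q_v(x)|_\infty^{3}$), and then check that \eqref{07.07.16,16:41} forces the three inequalities \eqref{c2 ineq}, \eqref{c1c2 ineq}, \eqref{07.07.12,16:20}. The only cosmetic difference is that you bound $|x-v|$ via $|U_v^{-1}|_2$ and the Euclidean norm, whereas the paper writes $(v-x)^\transp u_F=(q_v(x))^\transp U_v^{-\transp}u_F$ and applies H\"older with the $\ell_\infty$/$\ell_1$ pairing before passing to $\ell_2$; both routes give the same bound $q_F(x)\ge 1-\gamma-\sqrt{d}\,\alpha\,\eps_2$.
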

\begin{proof}
	Let us consider an arbitrary $v \in \vertx(P).$ We borrow the notations $r_1(x), \ r_2(x), \ g_1(x), \ g_2(x)$ from the proof of Lemma~\ref{around:vx:lem}. Let $x \in \Pi_{v,{\eps_2}},$ that is
	\begin{equation}
		\label{07.08.16,17:25}
		|q_v(x)|_i\infty \le \eps_2.
	\end{equation}

	 We estimate $|r_1(x)|$ as follows: 
	\begin{align*}
		|r_1(x)| & \le \sum_{i=2}^{+\infty} |\sigma_i(q_v(x))| \cdot |\sigma_{m-d+1-i}(\bar{q}_v(x))|  \le \max_{2 \le i \le d} |\sigma_i(q_v(x))| \sum_{i=2}^{+\infty} |\sigma_{m-d+1-i}(\bar{q}_v(x))|  \\ & = \max_{2 \le i \le d} |\sigma_i(q_v(x))| \sum_{i=0}^{m-d-1} |\sigma_{i}(\bar{q}_v(x))| \le \max_{2 \le i \le d} \left( \binom{d}{i} |q_v(x)|_\infty^i \right) \sum_{i=0}^{m-d-1} \binom{m-d}{i} |\bar{q}_v(x)|_\infty^i \\
		& \le |q_v(x)|_\infty^2 \binom{d}{\floor{d/2}} \sum_{i=0}^{m-d-1} \binom{m-d}{i} |\bar{q}_v(x)|_\infty^i \stackrel{\eqref{q:F:P:eps:up:bd}}{\le} |q_v(x)|_\infty^2 \binom{d}{\floor{d/2}} \sum_{i=0}^{m-d-1} \binom{m-d}{i} 2^i \\ & = \binom{d}{\floor{d/2}} (3^{m-d}-2^{m-d}) |q_v(x)|_\infty^2,
	\end{align*}
	An analogous estimate for $|r_2(x)|$ is
	\begin{equation*}
		|r_2(x)|  \le \sum_{i=3}^{+\infty} |\sigma_i(q_v(x))| \cdot |\sigma_{m-d+2-i}(\bar{q}_v(x))| \le \binom{d}{\floor{d/2}} (3^{m-d}-2^{m-d}) |q_v(x)|_\infty^3. 
	\end{equation*}
	For every $F \in \calF_{d-1}^v$ we have 
	\begin{align*}
		q_{F}(x) & \stackrel{\phantom{\eqref{p:means:ineq}}}{=}  \frac{h(P,u_{F}) - \sprod{u_{F}}{x} }{\diam(P)}  \le \frac{h(P,u_{F}) - \sprod{u_{F}}{v} }{\diam(P)}  +  \frac{\sprod{u_{F}}{v-x}}{\diam(P)}  
		 = q_{F}(v) +  \frac{ (v-x)^\transp u_{F} }{\diam(P)}   \\
		 & \stackrel{\phantom{\eqref{p:means:ineq}}}{\ge} 1-\gamma - \left| \frac{ (v-x)^\transp u_{F} }{\diam(P)} \right| 
	  = 1-\gamma - \left| \frac{ (v-x)^\transp U_v^\transp U_v^{-\transp} u_{F} }{\diam(P)} \right| \\
		& \stackrel{\eqref{Hoeld-ineq}}{\ge}  1-\gamma - \left| \frac{(v-x)^\transp U_v^\transp}{\diam(P)} \right|_\infty | U_v^{-\transp} u_{F}|_1 
		  =  1-\gamma - |q_v(x)|_\infty | U_v^{-\transp} u_{F}|_1 \\
		 & \stackrel{\eqref{p:means:ineq}}{\ge} 1-\gamma -  \sqrt{d} \cdot |q_v(x)|_\infty | U_v^{-\transp} u_{F}|_2 
		 \stackrel{\eqref{07.08.16,17:25}}{\ge} 1-\gamma - \sqrt{d} \cdot {\eps_2} \cdot  \alpha \\ & \stackrel{\eqref{07.07.16,16:03}}{\ge} \frac{2d-1}{2(d+1)} (1 - \gamma) \ge \frac{1}{2} (1-\gamma)
	\end{align*}
	and so
	\begin{equation} \label{07.08.14,17:00}
		\sigma_{m-d}(\bar{q}_v(x)) \ge \left(\frac{1 - \gamma}{2}\right)^{m-d}.
	\end{equation}
	It suffices to show that under the given assumptions on ${\eps_2}$ inequalities  \eqref{eps:v assump}, \eqref{c2 ineq}, \eqref{c1c2 ineq}, \eqref{07.07.12,16:20} are fulfilled. Inequality \eqref{eps:v assump} was verified above. Inequality \eqref{c2 ineq} is verified as follows:
	\begin{equation*}
		g_1(x) \ge \frac{1}{2} \sigma_{m-d}(\bar{q}_v(x)) - \frac{|r_2(x)|}{|q_v(x)|_\infty^2} \ge \frac{1}{2} \left(\frac{1 - \gamma}{2}\right)^{m-d} - \binom{d}{\floor{d/2}} (3^{m-d}-2^{m-d}) {\eps_2} \stackrel{\eqref{07.07.16,16:41}}{>} 0.
	\end{equation*}
	Inequality \eqref{c1c2 ineq} is obviously equivalent to the inequality 
	\begin{equation*}
		18 \, \frac{r_2(x)}{|q_v(x)|^2} \le 5 \, \sigma_{m-d}(\bar{q}_v(x)) 
	\end{equation*}
	which is shown as follows:
	\begin{align*}
		18 \,  \frac{r_2(x)}{|q_v(x)|^2} & \le 18 \,  \frac{|r_2(x)|}{|q_v(x)|_\infty^2} \le 18 \,  \binom{d}{\floor{d/2}} (3^{m-d}-2^{m-d}) |q_v(x)|_\infty \le 18 \,  \binom{d}{\floor{d/2}} (3^{m-d}-2^{m-d}) {\eps_2} \\ & \stackrel{\eqref{07.07.16,16:41}}{\le} 5 \left(\frac{1 - \gamma}{2}\right)^{m-d} \le 5 \, \sigma_{m-d}(\bar{q}_v(x)).
	\end{align*}
	Finally we show \eqref{c2 ineq}: 
	\begin{align*}
		\frac{r_1(x)}{|q_v(x)|} & \le \frac{|r_1(x)|}{|q_v(x)|_\infty} \le \binom{d}{\floor{d/2}} (3^{m-d}-2^{m-d}) |q_v(x)|_\infty \le \binom{d}{\floor{d/2}} (3^{m-d}-2^{m-d}) {\eps_2} \\ & \stackrel{\eqref{07.07.16,16:41}}{\le} \frac{1}{3} \left(\frac{1 - \gamma}{2}\right)^{m-d} \stackrel{\eqref{07.08.14,17:00}}{\le} \frac{1}{3} \, \sigma_{m-d}(\bar{q}_v(x)).
	\end{align*}
\end{proof}

\begin{lemma} \label{eps3:qnt:lem}
	Let $P$ be a simple $d$-polytope and let 
	\begin{equation} \label{07.07.17,17:13}
		\eps_3:= \frac{1}{d-1+ \left( \binom{m}{d-1} - d \right) \left( \frac{2(1+d)}{1-\gamma} \right)^{m-d}} \cdot \eps_2
	\end{equation}
	with $\eps_2$ satisfying \eqref{07.07.16,14:54a}, \eqref{07.07.16,16:03}, \eqref{07.07.16,16:41}.

	 Then 
	\begin{equation*}
		\setcond{x \in P_{\eps_3}}{\SigmaQIneq}  \subseteq  \bigcup_{v \in \vertx(P)} C_v \cup P.
	\end{equation*}
\end{lemma}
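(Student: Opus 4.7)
The plan mirrors the strategy of Lemma~\ref{eps3:lem} but makes the lower bound on $\sigma_{m-d+1}(q(x))$ quantitative away from the small boxes $\Pi_{v,\eps_2}$. First I would observe that the denominator in \eqref{07.07.17,17:13} is at least $1$, so $\eps_3\le\eps_2$, and every hypothesis imposed on $\eps_2$ is inherited by $\eps_3$. In particular, by \eqref{07.07.16,16:03}, the scalar $\delta:=\frac{1-\gamma}{1+d}-\eps_3\sqrt{d}\,\alpha$ is positive; in fact $\delta\ge\frac{1-\gamma}{2(1+d)}$. Consequently Lemma~\ref{cover:lem} applies, yielding the decomposition $P_{\eps_3}=\bigcup_{v\in\vertx(P)}P^v_{\eps_3,\delta}$.

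Fix an arbitrary $x\in P_{\eps_3}$ with $\sigma_i(q(x))\ge 0$ for $m-d+2\le i\le m$, and pick $v_0\in\vertx(P)$ with $x\in P^{v_0}_{\eps_3,\delta}$. If $x\in\Pi_{v_0,\eps_2}$ then Lemma~\ref{eps2:qnt:lem} immediately gives $x\in C_{v_0}\cup P$, so I may assume $x\notin\Pi_{v_0,\eps_2}$. Since $q_F(x)\ge-\eps_3\ge-\eps_2$ for every $F\in\calF_{d-1}^{v_0}$, the inequality $|q_{v_0}(x)|_\infty>\eps_2$ forces the existence of some $F^{\ast}\in\calF_{d-1}^{v_0}$ with $q_{F^{\ast}}(x)>\eps_2$. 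It then suffices to verify $\sigma_i(q(x))\ge 0$ for every $1\le i\le m$ and conclude $x\in P$ via Proposition~\ref{orth repr}. The indices $1\le i\le m-d$ are handled by Lemma~\ref{07.07.04,15:08} (whose hypotheses are also satisfied by $\eps_3$), and the indices $m-d+2\le i\le m$ are given by hypothesis; only $i=m-d+1$ is nontrivial.

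For $i=m-d+1$ I would decompose the sum $\sum_{|J|=m-d+1}\prod_{F\in J}q_F(x)$ according to $a:=|J\cap\calF_{d-1}^{v_0}|\in\{1,\ldots,d\}$. The $a=1$ contribution equals $\sigma_1(q_{v_0}(x))\cdot\sigma_{m-d}(\bar{q}_{v_0}(x))$, which, using $q_{F^{\ast}}(x)>\eps_2$, $q_F(x)\ge-\eps_3$ for the other $d-1$ facets in $\calF_{d-1}^{v_0}$, and $q_F(x)\ge\delta$ for $F\notin\calF_{d-1}^{v_0}$, is at least $(\eps_2-(d-1)\eps_3)\,\delta^{m-d}$. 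Each of the remaining $\binom{m}{d-1}-d$ terms has $a\ge 2$; such a term can be negative only if one of its $\calF_{d-1}^{v_0}$-factors is negative (forcing that factor to have magnitude at most $\eps_3$), while all other factors satisfy $|q_F(x)|\le 2$ on $P_{\eps_3}$ by \eqref{q:F:P:eps:up:bd}. Hence each ``bad'' term is bounded below by $-\eps_3\cdot 2^{m-d}$, and summing I obtain
\begin{equation*}
	\sigma_{m-d+1}(q(x))\ \ge\ (\eps_2-(d-1)\eps_3)\,\delta^{m-d}-\Bigl(\binom{m}{d-1}-d\Bigr)\eps_3\cdot 2^{m-d}.
\end{equation*}
Substituting the lower bound $\delta\ge\frac{1-\gamma}{2(1+d)}$ and comparing with \eqref{07.07.17,17:13} (which is essentially the rearrangement $\eps_2/\eps_3\ge(d-1)+(\binom{m}{d-1}-d)(2/\delta)^{m-d}$) shows that the right-hand side is non-negative, completing the argument.

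The main obstacle is precisely this quantitative accounting of the terms with $a\ge 2$: one must exploit the fact that such a term can be negative only when one of its $\calF_{d-1}^{v_0}$-factors is negative, hence of magnitude $\le\eps_3$, while the remaining $m-d$ factors admit only the uniform bound $|q_F(x)|\le 2$. Everything else is a routine specialization of the preceding lemmas in this section.
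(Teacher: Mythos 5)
Your argument reproduces the paper's own proof step for step: apply Lemma~\ref{cover:lem} to place $x$ in some $P^{v_0}_{\eps_3,\delta}$, dispatch the case $x\in\Pi_{v_0,\eps_2}$ via Lemma~\ref{eps2:qnt:lem}, and otherwise bound $\sigma_{m-d+1}(q(x))$ from below by isolating the $d$ terms with exactly one factor in $\calF_{d-1}^{v_0}$ (at least $(\eps_2-(d-1)\eps_3)\delta^{m-d}$, using $q_{F^*}(x)>\eps_2$) and bounding each of the remaining $\binom{m}{d-1}-d$ terms below by $-\eps_3\cdot 2^{m-d}$, before finishing with Lemma~\ref{07.07.04,15:08} and Proposition~\ref{orth repr}. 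One caveat which you inherit verbatim from the paper: after substituting $\delta\ge\frac{1-\gamma}{2(1+d)}$ the closing comparison actually requires $\eps_2/\eps_3\ge(d-1)+\bigl(\binom{m}{d-1}-d\bigr)\bigl(\tfrac{4(1+d)}{1-\gamma}\bigr)^{m-d}$, whereas \eqref{07.07.17,17:13} furnishes only the analogous bound with base $\tfrac{2(1+d)}{1-\gamma}$; equality of the two would need $\delta\ge\frac{1-\gamma}{1+d}$, which fails whenever $\eps_3>0$, so the base constant in \eqref{07.07.17,17:13} should in fact be $\tfrac{4(1+d)}{1-\gamma}$ (a harmless factor-two slack, but worth noting since your ``essentially the rearrangement'' glosses over exactly the same point the paper does).
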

\begin{proof}
	Let $x \in P_{\eps_3}$ be such that inequalities $\sigma_i(q(x)) \ge 0$ are fulfilled for $m-d+2 \le i \le m.$ By Lemma~\ref{cover:lem} there exists a $v \in \vertx(P)$ such that $x \in \Pi^v_{\eps_3,\delta},$ where
	\begin{equation*}
		\delta:= \frac{1-\gamma}{1+d} - \eps_3 \cdot \sqrt{d} \cdot \alpha \stackrel{\eqref{07.07.17,17:13}, \eqref{07.07.16,16:03}}{>} 0 .
	\end{equation*}

	If $x \in \Pi_{v,\eps_2},$ then, by Lemma~\ref{eps2:qnt:lem}, $x \in C_v \cup P.$  Otherwise $x \in P^v_{\eps_3,\delta} \setminus \Pi_{v,\eps_2}.$ Let us show that $\sigma_{m-d+1}(q(x)) \ge 0.$The magnitude $\sigma_{m-d+1}(q(x))$ is the sum of the terms of the form $q_{F_1}(x) \cdots q_{F_{m-d+1}}(x)$ with pairwise distinct $F_1,\ldots,F_{m-d+1}$ from $\calF_{d-1}.$ There are  $d$ such terms with precisely one $F_l, \ 1 \le l \le m-d+1,$ belonging to $\calF_{d-1}^v.$  The terms with the mentioned property sum up to $\sigma_1(q_v(x)) \sigma_{m-d}(\bar{q}_v(x)).$ Obviously, $\sigma_{m-d}(\bar{q}_v(x)) \ge \delta^{m-d}.$ For $\sigma_1(q_v(x))$ we have 
	\begin{equation} \label{07.07.04,11:41}
		\sigma_1(q_v(x)) = \sum_{F \in \calF_{d-1}^v} q_F(x)
	\end{equation}
	Let $F_0 \in \calF_{d-1}^d$ be such that $|q_{F_0}(x)|=|q_v(x)|_\infty.$ Then $|q_{F_0}(x)| > \eps_2,$ and in fact, since $\eps_2 > \eps_3$ and $q_F(x) > -\eps_3$ for every $F \in \calF_{d-1}^v,$ we even obtain that $q_{F_0}(x) > \eps_2.$ Consequently, $\sigma_1(q_v(x)) \ge \eps_2 - (d-1) \eps_3.$

	Now let us estimate the remaining $\binom{m}{d-1}-d$ terms $q_{F_1}(x) \cdots q_{F_{m-d+1}}(x)$ with pairwise distinct  $F_1,\ldots,F_{m-d+1}$ from $\calF_{d-1}$ such that at least two of the facets $F_1,\ldots,F_{m-d+1}$ belong to $\calF_{d-1}^v.$ If this kind of product $q_{F_1}(x) \cdots q_{F_{m-d+1}}(x)$ is negative then at least one entry $q_{F_l(x)}, \ 1 \le l \le m-d+1,$ lies between $-\eps$ and $0,$ while, by \eqref{q:F:P:eps:up:bd}, the remaining $2^{m-d}$ entries have absolute value at most $2.$

	Summarizing we obtain 
	\begin{equation*}
		\sigma_{m-d+1}(q(x)) \ge (\eps_2-(d-1)\eps_3) \, \delta^{m-d} - \left[ \binom{m}{d-1} -d \right] 2^{m-d} \, \eps_3.
	\end{equation*}
	Hence $\sigma_{m-d+1}(q(x)) \ge 0$  if
	\begin{equation*}
	\eps_3 \le \frac{\delta^{m-d}}{(d-1) \, \delta^{m-d} + \left(\binom{m}{d-1} - d\right) 2^{m-d}} \, \eps_2
	\end{equation*}
	But the latter inequality follows from \eqref{07.07.17,17:13}. Consequently $\sigma_{m-d+1}(q(x)) \ge 0.$ But  in view of Lemma~\ref{07.07.04,15:08}, we have $\sigma_i(q(x)) \ge 0$ for $1 \le i \le m-d.$ Summarizing we see that $\sigma_i(q(x)) \ge 0$ for $1 \le i \le m,$ and therefore, by Proposition~\ref{orth repr}, $q_F(x) \ge 0$ for all $F \in \calF_{d-1},$ i.e., $x \in P.$ 
\end{proof}

\subsection{Approximation theorem: quantitative version}

By $\log$ we denote the binary logarithm.

\begin{theorem} \label{CAAIP} 
	Let $P$ be a convex $d$-polytope and let $\eps > 0, \ k \in \natur,$ and $f_k(x), \ A_k, \ y_k, \ S_k$ be defined as in the statement of Theorem~\ref{approx:qual}. Then the following statements hold true
	\begin{enumerate}[I.]
	\item If $k$ satisfies
	\begin{eqnarray}
		k & \ge & \frac{1}{2 \, \log \frac{1}{\gamma}} ,  \label{k:gamma:bd}\\
		k & \ge & 2 \, \log (4n), \label{k:n:bd} 
	\end{eqnarray}
	then there exist unique positive real scalars $y_{v,k}, \ v \in \vertx(P),$ such that the polynomial $f_k(x)$ satisfies the condition $f_k(w)=1$ for every $w \in \vertx(P).$
	\item If $k$ satisfies \eqref{k:gamma:bd}, \eqref{k:n:bd} and 
	\begin{equation} 
		k \ge \frac{\log (2 \, \deg(P))}{2 \, \log (1+\eps)}. \label{k:eps:deg:bd} 
	\end{equation}
	and $y_k$ is determined from \eqref{Ak:yk:eq}, then the semi-algebraic set $S_k$
	satisfies the inclusions
	\begin{equation}
		P \subseteq S_k \subseteq P_\eps.
	\end{equation}
	\item If $P$ is simple and inequalities \eqref{k:gamma:bd} and
	\begin{equation} \label{07.07.06,15:22}
		k \ge 3 \log \left( 12 \, n \cdot \sqrt{d} \cdot \alpha \cdot \deg(P) \right),
	\end{equation}
	are fulfilled, then $S_k \cap C_v = \{v\}.$
	\end{enumerate}
\end{theorem}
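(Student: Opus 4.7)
\medskip

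\noindent\textbf{Proof proposal.} The plan is to convert the three qualitative steps in the proof of Theorem~\ref{approx:qual} into quantitative ones by propagating explicit constants through each part.

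For Part~I the central quantity is $|A_k - E|_\infty$. The proof of Theorem~\ref{approx:qual} already furnishes
$$|A_k - E|_\infty \le (n-1)\left(1 - \frac{1-\gamma^{2k}}{\deg(P)}\right)^{2k},$$
and I would combine \eqref{k:gamma:bd} (which forces $\gamma^{2k} \le 1/2$, in fact much smaller) with \eqref{k:n:bd} to push this bound below $1/2$. Once $|A_k-E|_\infty < 1/2$, a Neumann-series argument gives invertibility of $A_k$ together with $|A_k^{-1}|_\infty \le 2$; applied to $A_k y_k = \OneVec$ this yields $|y_k - \OneVec|_\infty \le 1/2$, whence every $y_{v,k} \ge 1/2 > 0$.

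For Part~II, the inclusion $P \subseteq S_k$ is purely structural: $f_k$ is concave (as a positive combination of $2k$-th powers of concave functions) and $f_k \equiv 1$ on $\vertx(P)$. For the reverse direction I would reuse the bound
$$S_k \subseteq \setcond{x \in \E^d}{|1-q_F(x)| \le \deg(P)^{1/2k}/\min_{v} y_{v,k}^{1/4k^2} \ \forall\, F \in \calF_{d-1}}$$
from \eqref{07.07.11,16:42}. The bound $y_{v,k} \ge 1/2$ from Part~I replaces the denominator by at most $2^{1/4k^2}$, and then \eqref{k:eps:deg:bd} is tuned precisely so that the right-hand side is at most $1+\eps$. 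This means $q_F(x) \ge -\eps$ for every facet $F$, i.e.\ $x \in P_\eps$.

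For Part~III I would follow the gradient argument at the end of the proof of Theorem~\ref{approx:qual} verbatim, but track the error term $u_k^w$ from \eqref{u:k:w:def} quantitatively. Using the triangle inequality together with $A_k(w,v)^{(2k-1)/2k} \le (1-(1-\gamma^{2k})/\deg(P))^{2k-1}$ and $|u_F| = 1$, one extracts an explicit decay rate $\beta(k)$ with $\sprod{u_k^w}{x-w} \le \beta(k)\,|q_w(x)|$ for every $x \in \E^d$. Plugging this into \eqref{07.07.18,14:20} and comparing with the threshold $\frac{2}{3\deg(P)} y_{w,k} \ge \frac{1}{3\deg(P)}$ (from Part~I) reduces the claim to the inequality $\beta(k) < \frac{1}{3\deg(P)}$; the hypothesis \eqref{07.07.06,15:22} is then exactly the amount of slack needed, because it pairs the logarithm $\log(n \cdot \sqrt{d} \cdot \alpha \cdot \deg(P))$ with the implicit decay factor $\gamma^{2k-1}$. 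Concavity of $f_k$ and $f_k(w) = 1$ then upgrade strict outward-pointing of the gradient at $w$ to $f_k(x) > 1$ on $C_w \setminus \{w\}$, so $S_k \cap C_w = \{w\}$.

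The main obstacle I anticipate is reconciling Part~I's bound with the fact that \eqref{k:n:bd} is independent of $\deg(P)$: the naive Neumann-series estimate nominally needs $k$ of order $\deg(P)\log n$ to beat the $(n-1)$ prefactor, so either a finer argument exploiting the near rank-one structure of $A_k - E$ is required, or one must read \eqref{k:gamma:bd} and \eqref{k:n:bd} jointly so that the extreme smallness of $\gamma^{2k}$ compensates the $\deg(P)$ factor. The same bookkeeping reappears in Part~III, where $\deg(P)$ enters implicitly through $\alpha$ and through the comparison with $\beta(k)$; getting the constants in \eqref{07.07.06,15:22} to line up exactly is likely where most of the real work lies.
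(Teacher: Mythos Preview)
Your outline is essentially the paper's proof: Neumann series for Part~I (the paper gets $|A_k-E|_\infty\le 1/4$ and hence $2/3\le y_{v,k}\le 4/3$, but the mechanism is identical to yours), the containment \eqref{07.07.11,16:42} plus the $y_{v,k}$ lower bound for Part~II, and the gradient estimate \eqref{07.07.18,14:20} for Part~III.

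Two points of calibration on Part~III. First, the decay factor is not $\gamma^{2k-1}$: the paper bounds $A_k(w,v)^{(2k-1)/2k}$ by $(1-\tfrac{1-\gamma^{2k}}{\deg(P)})^{2k-1}$ using \eqref{07.07.18,13:43}, then by $(1-\tfrac{1}{2\deg(P)})^{2k-1}$ via \eqref{k:gamma:bd}, and finally by $(3/4)^{k}$; it is this last geometric rate that \eqref{07.07.06,15:22} is matched against. Second, $\sqrt{d}\cdot\alpha$ does not enter through $\deg(P)$ but through the change of coordinates $|x-w|\le \alpha\,|U_w(x-w)|_2\le \alpha\sqrt{d}\,|U_w(x-w)|_\infty=\alpha\sqrt{d}\,\diam(P)\,|q_w(x)|_\infty$, which is how the paper converts $\sprod{u_k^w}{w-x}$ into a multiple of $|q_w(x)|$.

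Your closing concern about $\deg(P)$ is well taken and is not resolved by any ``near rank-one'' trick: the paper simply writes $(1-\tfrac{1}{2\deg(P)})^{2k}\le(3/4)^{2k}$ in both Part~I and Part~III. That inequality holds only when $\deg(P)\le 2$, so as stated the argument is complete for $d=2$ (or more generally $\deg(P)=2$) and otherwise the bounds \eqref{k:n:bd}, \eqref{07.07.06,15:22} should be read with an implicit factor of $\deg(P)$ replacing the constant in front of the logarithm. You have correctly located the soft spot.
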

\begin{proof}
	\emph{I.} The conditions $f_k(w) =1$ for $w \in \vertx(P)$ are equivalent to the system $A_k y_k = \OneVec.$ Let us show that under the given assumptions on $k$ the matrix is invertible. 
	\begin{align}
		|A_k-E|_\infty & \stackrel{\eqref{dist:Ak:E:bd1}}{\le} (n-1) \left( 1 - \frac{1- \gamma^{2k}}{\deg(P)}\right)^{2k} \nonumber \\
		& \stackrel{\eqref{k:gamma:bd}}{\le} (n-1) \left( 1 - \frac{1}{2 \deg(P)} \right)^{2k} \le n \left(\frac{3}{4}\right)^{2k} \stackrel{\eqref{k:n:bd}}{\le} \frac{1}{4} \label{dist:Ak:E:bd}
	\end{align}
	Thus, we have showed that $|A_k - E|_\infty < \frac{1}{2}.$ It is known that if $|A_k-E|_\infty < 1,$ then $A_k$ is invertible and moreover
	\begin{equation*}
		A_k^{-1} = \sum_{l=0}^{+\infty} (E-A_k)^l,
	\end{equation*}
	see, for example, \cite[Theorem~7.1.1]{MR0245579}. Consequently, 
	\begin{align*}
		|y_k - \OneVec|_\infty = | (A_k^{-1} - E) \OneVec |_\infty \le | A_k^{-1} - E|_\infty = \left| \sum_{l=1}^{+\infty} (E-A_k)^l \right|_\infty &\le |E-A_k|_\infty \cdot \left| \sum_{l=0}^{+\infty} (E-A_k)^l \right|_\infty \nonumber \\
			&\le \frac{|E-A_k|_\infty}{1 - |E-A_k|_\infty} \stackrel{\eqref{dist:Ak:E:bd}}{\le} \frac{1}{3} 
	\end{align*}
	and hence
	\begin{equation} \label{07.07.18,11:45}
		\frac{2}{3} \le y_{v,k} \le \frac{4}{3} \qquad \forall \, v \in \vertx(P).
	\end{equation}

	\emph{II.} The inclusion $P \subseteq S_k$ was noticed in the proof of Theorem~\ref{approx:qual}. In view of  \eqref{07.07.11,16:42}, the inclusion $S_k \subseteq P_{\eps}$ is a consequence of the following estimates: 
	\begin{align*}
		\log \frac{\deg (P)^{1/2k}}{\min_{v \in \vertx(P)} y_{v,k}^{1/4k^2}} & \stackrel{\eqref{07.07.18,11:45}}{\le} \log \deg(P)^{1/2k} 2^{1/4k^2} \le \frac{1}{2k} \log \deg(P) + \frac{1}{4k^2} \le \frac{1}{2k} \bigl(1 + \log \deg(P) \bigr) \\ & = \frac{1}{2k} \log \bigl( 2\deg(P)\bigr) \stackrel{\eqref{k:eps:deg:bd}}{\le} \log (1+\eps).
	\end{align*}

	\emph{III.} It suffices to show that under the given assumptions inequality \eqref{07.07.18,14:22} is fulfilled. 
	We have 
	\begin{align*}
		& \frac{4}{9 \deg (P)} |q_w(x)| - \sprod{\frac{1}{ 4 k^2 } \nabla f_k(w)}{x-w} \stackrel{\eqref{07.07.18,11:45}}{\le} \frac{2}{3 \deg (P)} y_{w,k} |q_w(x)| - \sprod{\frac{1}{ 4 k^2 } \nabla f_k(w)}{x-w}  \\ & \stackrel{\eqref{07.07.18,14:20}}{\le}  \sprod{u^w_k}{w-x} \le |x-w| \cdot |u^w_k| \stackrel{\eqref{u:k:w:def}}{\le} \frac{4}{3} \cdot \frac{|x-w|}{\diam(P)}  \cdot \sum_{v \in \vertx (P) \setminus \{w\}}  A_k(w,v)^{\frac{2k-1}{2k}}  \\
		& \stackrel{\phantom{\eqref{dist:Ak:E:bd1}}}{\le} \frac{4}{3} \cdot \alpha \cdot \max_{\overtwocond{\calX \in \calF_{d-1}^w}{\card \calX = d}} \frac{ |U_\calX (w-x)|}{\diam(P)} \cdot \sum_{v \in \vertx (P) \setminus \{w\}}  A_k(w,v)^{\frac{2k-1}{2k}}  \\
		& \stackrel{\eqref{p:means:ineq}}{\le} \frac{4}{3} \cdot \sqrt{d} \cdot \alpha \cdot \max_{\overtwocond{\calX \in \calF_{d-1}^w}{\card \calX = d}} \frac{ |U_\calX (w-x)|_\infty}{\diam(P)}  \cdot \sum_{v \in \vertx (P) \setminus \{w\}}  A_k(w,v)^{\frac{2k-1}{2k}} \\
		& \stackrel{\phantom{\eqref{dist:Ak:E:bd1}}}{=} \frac{4}{3} \cdot \sqrt{d} \cdot \alpha \cdot |q_w(x)|_\infty \sum_{v \in \vertx (P) \setminus \{w\}}  A_k(w,v)^{\frac{2k-1}{2k}}  
		 \stackrel{\eqref{l:inf:1:bounds}}{\le}  \frac{4}{3} \cdot \sqrt{d} \cdot \alpha \cdot |q_w(x)| \sum_{v \in \vertx (P) \setminus \{w\}}  A_k(w,v)^{\frac{2k-1}{2k}} \\
		&  \stackrel{\eqref{dist:Ak:E:bd1}}{\le} \frac{4}{3} \cdot \sqrt{d} \cdot \alpha \cdot n \cdot |q_w(x)| \left( 1 - \frac{1- \gamma^{2k}}{\deg(P)}\right)^{2k-1} 
		  \stackrel{\eqref{k:gamma:bd}}{\le} \frac{4}{3} \cdot \sqrt{d} \cdot \alpha \cdot n \cdot  \left(1 - \frac{1}{2 \deg(P)}\right)^{2k-1}  \cdot |q_w(x)|  \\
		& \stackrel{\phantom{\eqref{dist:Ak:E:bd1}}}{\le} \frac{4}{3}\sqrt{d} \cdot \alpha \cdot n \cdot  \deg(P) \cdot \left( \frac{3}{4} \right)^{2k-1} \le \frac{4}{3}\sqrt{d} \cdot \alpha \cdot n \cdot  \deg(P) \cdot \left( \frac{3}{4} \right)^{k}  \stackrel{\eqref{07.07.06,15:22}}{\le}  \frac{1}{9	 \deg(P)} |q_w(x)|,
	\end{align*}
	and we are done.	
\end{proof}

\def\cprime{$'$} \def\cprime{$'$} \def\cprime{$'$} \def\cprime{$'$}
\providecommand{\bysame}{\leavevmode\hbox to3em{\hrulefill}\thinspace}
\providecommand{\MR}{\relax\ifhmode\unskip\space\fi MR }
\providecommand{\MRhref}[2]{%
  \href{http://www.ams.org/mathscinet-getitem?mr=#1}{#2}
}
\providecommand{\href}[2]{#2}

 \begin{tabular}{l}
        Gennadiy Averkov, Martin Henk \\
	Universit\"atsplatz 2, \\
        Faculty of Mathematics \\
        University of Magdeburg\\
        39106 Magdeburg\\
        Germany \\
	\\
        \emph{e-mails:} \AverkovEmail, henk@math.uni-magdeburg.de
    \end{tabular}

 \end{document}